\newtheorem{thm}{Theorem}[section]
\newtheorem*{thm*}{Theorem}
\newtheorem{cor}[thm]{Corollary}
\newtheorem{prop}[thm]{Proposition}
\theoremstyle{definition}
\newtheorem{conv}[thm]{Convention}
\newtheorem{dfn}[thm]{Definition}
\newtheorem*{dfn*}{Definition}
\newtheorem{rem}[thm]{Remark}
\newtheorem{ques}[thm]{Question}
\newtheorem*{conj*}{Conjecture}
\newtheorem{ex}[thm]{Example}
\newtheorem*{ex*}{Example 4.11}
\newtheorem{nota}[thm]{Notation}
\theoremstyle{remark}
\newtheorem*{ac}{Acknowledgments}
\newtheorem*{claim*}{Claim}
\renewcommand{\qedsymbol}{$\blacksquare$}
\numberwithin{equation}{thm}
\def\a{\mathfrak{a}}
\def\AA{\mathbf{A}}
\def\ann{\operatorname{ann}}
\def\Ass{\operatorname{\mathrm{Ass}}}
\def\b{\mathfrak{b}}
\def\BB{\mathbf{B}}
\def\CC{\mathbf{C}}
\def\cd{\operatorname{cd}}
\def\cl{\operatorname{cl}}
\def\D{\mathsf{D}}
\def\d{\mathrm{D}}
\def\E{\mathbb{E}}
\def\e{\mathrm{E}}
\def\Ext{\mathrm{Ext}}
\def\f{\mathtt{fg}}
\def\ge{\geqslant}
\def\grade{\operatorname{grade}}
\def\H{\mathrm{H}}
\def\height{\operatorname{ht}}
\def\Hom{\mathsf{Hom}}
\def\Im{\operatorname{Im}}
\def\Inj{\operatorname{\mathsf{Inj}}}
\def\Ker{\operatorname{Ker}}
\def\L{\mathcal{L}}
\def\le{\leqslant}
\def\ltensor{\otimes^{\bf{L}}}
\def\m{\mathfrak{m}}
\def\Max{\operatorname{Max}}
\def\mod{\operatorname{\mathsf{mod}}}
\def\Mod{\operatorname{\mathsf{Mod}}}
\def\NN{\mathbb{N}}
\def\P{\mathfrak{P}}
\def\p{\mathfrak{p}}
\def\pd{\operatorname{pd}}
\def\q{\mathfrak{q}}
\def\R{\mathbf{R}}
\def\Spec{\operatorname{Spec}}
\def\supp{\operatorname{supp}}
\def\Supp{\operatorname{Supp}}
\def\X{\mathcal{X}}
\def\xx{\boldsymbol{x}}
\def\V{\mathrm{V}}
\def\Z{\mathbb{Z}}
\begin{document}
\title{Cohomological dimensions of specialization-closed subsets and subcategories of modules}
\author{Hiroki Matsui}
\address[H. Matsui]{Graduate School of Mathematical Sciences\\ University of Tokyo, 3-8-1 Komaba, Meguro-ku, Tokyo 153-8914, Japan}
\email{mhiroki@ms.u-tokyo.ac.jp}
\author{Tran Tuan Nam}
\address[T. T. Nam]{Department of Mathematics-Informatics, Ho Chi Minh University of  Pedagogy, 280 An Duong Vuong, District 5, Ho Chi Minh city, Vietnam}
\email{namtt@hcmue.edu.vn, namtuantran@gmail.com}
\author{Ryo Takahashi}
\address[R. Takahashi]{Graduate School of Mathematics, Nagoya University, Furocho, Chikusaku, Nagoya 464-8602, Japan}
\email{takahashi@math.nagoya-u.ac.jp}
\urladdr{http://www.math.nagoya-u.ac.jp/~takahashi/}
\author{Nguyen Minh Tri}
\address[N. M. Tri]{Department of Natural Science Education, Dong Nai University, Dong Nai, Vietnam}
\email{triminhng@gmail.com}
\author{Do Ngoc Yen}
\address[D. N. Yen]{Faculty of Mathematics \& Computer Science, University of Science, VNU-HCM, Ho Chi Minh City, Vietnam / Faculty of Fundamental Science 2, Posts and Telecommunications Institute of Technology, Ho Chi Minh City, Vietnam}
\email{dongocyen.dhsp@gmail.com}
\subjclass[2010]{13C60, 13D09, 13D45}
\keywords{big Cohen--Macaulay module, cohomological dimension, derived category, local cohomology, localizing subcategory, module category, Serre subcategory, support, specialization-closed subset, torsion functor, transform functor, wide subcategory}
\thanks{Matsui was partly supported by JSPS Grant-in-Aid for JSPS Fellows 19J00158. Nam, Tri and Yen were funded by Vietnam National Foundation for Science and Technology Development (NAFOSTED) under Grant No. 101.04-2018.304. Takahashi was partly supported by JSPS Grants-in-Aid for Scientific Research 16K05098 and JSPS Fund for the Promotion of Joint International Research 16KK0099.}
\begin{abstract}
Let $R$ be a commutative noetherian ring.
In this paper, we study specialization-closed subsets of $\Spec R$.
More precisely, we first characterize the specialization-closed subsets in terms of various closure properties of subcategories of modules.
Then, for each nonnegative integer $n$ we introduce the notion of $n$-wide subcategories of $R$-modules to consider the question asking when a given specialization-closed subset has cohomological dimension at most $n$.
\end{abstract}
\maketitle
\section{Introduction}

Local cohomology has been introduced by Grothendieck and has been a fundamental tool in commutative algebra and algebraic geometry.
The most important problem concerning local cohomology is to clarify when it vanishes.
In the late 1960s, to explore this problem, Hartshorne \cite{H} has defined the {\em cohomological dimension} $\cd I$ of an ideal $I$ of a commutative ring $R$ as the highest index of the non-vanishing local cohomologies supported on $I$.
This numerical invariant has been studied widely and deeply so far.
Among other things, the celebrated Hartshorne--Lichtenbaum vanishing theorem \cite{H} gives an equivalent condition for $I$ to have $\cd I\le\dim R-1$.
Ogus \cite{O}, Peskine and Szpiro \cite{PS}, and Huneke and Lyubeznik \cite{HL} give characterizations of the ideals $I$ with $\cd I\le\dim R-2$.
Recently, Varbaro \cite{V} and Dao and Takagi \cite{DT} have studied the ideals $I$ with $\cd I\le\dim R-3$.
The cohomological dimension $\cd I$ of an ideal $I$ is naturally extended to the cohomological dimesion $\cd\Phi$ of a specialization-closed subset $\Phi$ of $\Spec R$, that is, the cohomological dimension of an ideal coincides with the cohomological dimension of the Zariski-closed subset defined by the ideal.

A {\em Serre subcategory} of an abelian category is by definition a full subcategory closed under extensions, subobjects and quotient objects.
A {\em localizing subcategory} is defined to be a Serre subcategory closed under coproducts.
These notions were first studied deeply by Gabriel \cite{Gab}.
It was proved that for a commutative noetherian ring $R$ a subset $\Phi$ of $\Spec R$ is specialization-closed if and only if $\Supp^{-1}\Phi$ is localizing, if and only if $\Supp_\f^{-1}\Phi$ is Serre (see Notation \ref{101}).
Since then, localizing subcategories have been investigated by many authors to develop geometric studies of abelian categories; see \cite{Gar,GP1,GP2,Her,K,hcls}.
A {\em wide subcategory} of an abelian category is defined as a full subcategory closed under extensions, kernels and cokernels.
In recent years, wide subcategories have actively been investigated in representation theory of algebras; see \cite{AP,BM,MS,wide,Y}.

In this paper, we first characterize the specialization-closed subsets in terms of various closure properties of subcategories of modules, which complements the above mentioned theorem due to Gabriel; see Theorem \ref{spcl}.
Next, we introduce the notion of an {\em $n$-wide subcategory} for each nonnegative integer $n$.
An $n$-wide subcategory turns out to be nothing but a wide (resp. localizing) subcategory for $n=1$ (resp. $n=0$).
We explore the cohomological dimension of a specialization-closed subset by relating it to the $n$-wide property of a certain corresponding full subcategory of modules.

\begin{thm}[Theorems \ref{nser}, \ref{01}, \ref{100} and \ref{10}]\label{11}
Let $R$ be a commutative noetherian ring.
Let $n\ge0$ be an integer.
\begin{enumerate}[\rm(1)]
\item
Let $\Phi$ be a specialization-closed subset of $\Spec R$.
Then the implication
$$
\cd\Phi \le n\implies\supp^{-1}(\Phi^\complement)\text{ is $n$-wide}
$$
holds true.
The converse holds true as well in each of the following cases.
\begin{enumerate}[\rm(a)]
\item
$n\le1$ or $n\ge\dim R-1$.
\item
$(R,\m,k)$ is complete regular local with $k$ separably closed, $\Phi$ is closed with $\Phi\setminus\{\m\}$ connected, and $n=\dim R-2$.
\item
$R$ has positive prime characteristic, and $\Phi$ is closed with a perfect defining ideal.
\item
$R = S[\Delta]$ is a semigroup ring, and $\Phi$ is closed with a perfect defining ideal generated by elements of $\Delta$.
\end{enumerate}
\item
The following are equivalent.
\begin{enumerate}[\rm(a)]
\item
$\cd\Phi\le n$ for all specialization-closed subsets $\Phi$ of $\Spec R$.
\item
$\supp^{-1}(\Phi^\complement)$ is $n$-wide for all specialization-closed subsets $\Phi$ of $\Spec R$.
\item
$\supp^{-1}((\Max R)^\complement)$ is $n$-wide.
\item
$\dim R\le n$.
\end{enumerate}
\end{enumerate}
\end{thm}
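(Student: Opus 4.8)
The plan is to prove the equivalences in part~(2) by the cycle $(d)\Leftrightarrow(a)\Rightarrow(b)\Rightarrow(c)\Rightarrow(d)$. The equivalence $(d)\Leftrightarrow(a)$ rests on the identity $\cd(\Max R)=\dim R=\sup_\Phi\cd\Phi$, the supremum over all specialization-closed $\Phi$: the bound $\cd\Phi\le\dim R$ holds for every such $\Phi$ by Grothendieck's vanishing theorem (writing $\H^i_\Phi$ as the filtered colimit of the $\H^i_I$ over ideals $I$ with $\V(I)\subseteq\Phi$, each of which vanishes for $i>\dim R$), while $\cd(\Max R)\ge\dim R$ follows by choosing a maximal ideal $\m$ with $\height\m=\dim R$ --- which exists since every prime of a noetherian ring has finite height and $\dim R=\sup_{\m\in\Max R}\height\m$ --- and observing that $\H^{\dim R}_{\Max R}(R)_\m\cong\H^{\dim R}_{\m R_\m}(R_\m)$ is nonzero by Grothendieck non-vanishing (here $\m R_\m$ is the unique maximal ideal of $R_\m$, so the specialization closure of $\Max R$ in $\Spec R_\m$ is $\{\m R_\m\}$). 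Granting this, $(d)$ gives $\cd\Phi\le\dim R\le n$ for all $\Phi$, i.e.\ $(a)$, and $(a)$ gives $\dim R=\cd(\Max R)\le n$, i.e.\ $(d)$. The implication $(a)\Rightarrow(b)$ is immediate from the forward implication of part~(1) applied to each $\Phi$, and $(b)\Rightarrow(c)$ is trivial because $\Max R$ is specialization-closed, so $(c)$ is the instance $\Phi=\Max R$ of $(b)$.

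The substance is $(c)\Rightarrow(d)$. Since $\cd(\Max R)=\dim R$, it suffices to deduce $\cd(\Max R)\le n$ from the hypothesis that $\supp^{-1}((\Max R)^\complement)$ is $n$-wide. When $n\ge\dim R-1$ this is precisely case~(a) of part~(1) applied to $\Phi=\Max R$ (that case imposes no closedness hypothesis), whose converse yields $\cd(\Max R)\le n$ at once. So the task is to reduce the remaining range $n\le\dim R-2$ to this one. I would do this in two steps. First, localize at the maximal ideal $\m$ with $\height\m=\dim R$: the functor $(-)_\m$ is exact and preserves coproducts, and it carries $\supp^{-1}((\Max R)^\complement)$ onto $\supp^{-1}((\Max R_\m)^\complement)$, using $\supp_{R_\m}(M_\m)=\supp_R M\cap\Spec R_\m$ and that $\m R_\m$ is the unique maximal ideal of $R_\m$; as $n$-wideness is formulated through exact sequences and coproducts it is inherited, so we may assume $(R,\m)$ is local of dimension $e=\dim R\ge n+2$. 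Second, choose a partial system of parameters $x_1,\dots,x_{e-n-1}$ and put $\bar R=R/(x_1,\dots,x_{e-n-1})$, a local ring of dimension $n+1$; the full subcategory of $\bar R$-modules lying in $\supp^{-1}((\Max R)^\complement)$ coincides with $\supp^{-1}((\Max\bar R)^\complement)$ (compare small supports along the closed immersion $\Spec\bar R\hookrightarrow\Spec R$, whose image meets $\Max R$ only in $\m$), and it is $n$-wide over $\bar R$ because $\Mod\bar R\hookrightarrow\Mod R$ is fully faithful, exact and coproduct-preserving, so the defining conditions of $n$-wideness transfer verbatim. Now $n\ge\dim\bar R-1$, so case~(a) of part~(1) applied to $\bar R$ gives $\cd(\Max\bar R)\le n$; but $\bar R$ is local, whence $\cd(\Max\bar R)=\dim\bar R=n+1$, a contradiction. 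Therefore the range $n\le\dim R-2$ is empty, and we are done.

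The main obstacle I anticipate is isolating the precise sense in which the $n$-wide property is stable under the two operations above --- localization, and restriction of scalars along the surjection $R\twoheadrightarrow\bar R$ killing a partial system of parameters. This hinges on the exact definition of $n$-wide set up earlier in the paper, and it is cleanest to record it as a lemma stating that along an exact, fully faithful, coproduct-preserving embedding $\Mod\bar R\hookrightarrow\Mod R$ (and similarly along a flat exact coproduct-preserving localization) the $n$-wide conditions pass between a full subcategory and, respectively, its intersection with or preimage under the functor. A secondary point is the bookkeeping that steers the dimension reduction into exactly the range $n\ge\dim R-1$, the only one in which the converse of part~(1) is unconditional. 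An alternative route for $(c)\Rightarrow(d)$ would be to build, over a local ring of dimension $\ge n+1$, an explicit cokernel or short exact sequence among the injective hulls $\e(R/\p)$ of non-maximal primes $\p$ that violates $n$-wideness; but funnelling everything through case~(a) of part~(1) as above is more economical.
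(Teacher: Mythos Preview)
Your argument is correct in outline but takes a longer path than the paper. For $(c)\Rightarrow(d)$ the paper argues in one step: if $\dim R>n$, pick $\m\in\Max R$ with $h:=\height\m>n$ and apply Proposition~\ref{111} with $\Phi=\Max R$ to obtain $\H_{\Max R}^h(R)_\m=0$; by Proposition~\ref{1}(5) this equals $\H_{\m R_\m}^h(R_\m)$, contradicting Grothendieck non-vanishing. Your route --- localize at $\m$, mod out by a partial system of parameters to reach a local ring $\bar R$ of dimension $n+1$, then invoke case~(a) of part~(1) --- also works, but it does not bypass the big Cohen--Macaulay input: case~(a) at $n=\dim\bar R-1$ is Theorem~\ref{01}(3), whose proof is precisely the same one-line application of Proposition~\ref{111}. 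So your detour buys modularity (deriving (2) formally from (1)) at the cost of two extra reduction lemmas.

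A few points to tighten. First, when $\dim R=\infty$ there is no $\m$ with $\height\m=\dim R$; simply choose any $\m$ with $\height\m>n$ and the rest goes through. Second, the passage of $n$-wideness to $R_\m$ is cleanest via the fully faithful exact inclusion $\Mod R_\m\hookrightarrow\Mod R$, under which $\supp_{R_\m}^{-1}((\Max R_\m)^\complement)$ is exactly $\supp_R^{-1}((\Max R)^\complement)\cap\Mod R_\m$; your description via the localization functor $(-)_\m$ is a bit muddled, though it leads to the same conclusion. Third, the claim that for a $\bar R$-module $M$ one has $\m\in\supp_R M\Leftrightarrow\bar\m\in\supp_{\bar R}M$ is true but not as immediate as your parenthetical suggests; it requires a change-of-rings spectral sequence argument (the $\Leftarrow$ direction is easy, but $\Rightarrow$ needs an induction through the $E^2$-page). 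Finally, $n$-wideness in this paper involves only extensions, $n$-kernels and $n$-cokernels --- coproducts play no role, so that part of your stability lemma is unnecessary.
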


Here, for a subset $\Phi$ of $\Spec R$ we denote by $\Phi^\complement$ the complement of $\Phi$ in $\Spec R$, and by $\Max R$ the set of maximal ideals of $R$.
Also, $\supp$ stands for the {\em small support} introduced by Foxby \cite{Fox}.
Theorem \ref{11} includes the recent theorem of Angeleri H\"ugel, Marks, {\v S}{\v{t}}ov{\'{\i}}{\v{c}}ek, Takahashi and Vit\'{o}ria \cite{epi}, which asserts Theorem \ref{11}(1a) for $n=1$.
It turns out that, whenever $2\le n\le\dim R-2$, the converse of the implication displayed in Theorem \ref{11}(1) does not necessarily hold; see Example \ref{102'}.
The proofs of (1a) and (2) of Theorem \ref{11} use balanced big Cohen--Macaulay modules, whose existence has been shown recently by Andr\'e \cite{And}.

The organization of this paper is as follows.
In Section 2, we state several basic properties of supports, small supports and associated primes to interpret them in terms of closure properties of subcategories of modules.
Section 3 gives preliminaries for the next section.
We introduce the torsion, local cohomology and transform functors with respect to a specialization-closed subset and its cohomological dimension, and investigate their fundamental properties.
The key role is played by the corresponding localization sequence in the derived category.
In Section 4, we define an $n$-wide subcategory of modules for each nonnegative integer $n$.
After verifying several basic properties of them, we consider the $n$-wideness of the subcategory of modules corresponding to a specialization-closed subset, and prove Theorem \ref{11}.

\section{A characterization of the specialization-closed subsets}

We begin with our convention.

\begin{conv}
Throughout this paper, we assume that all rings are commutative and noetherian and all subcategories are full.
We set $\NN=\Z_{\ge0}=\{0,1,2,\dots\}$.
Let $R$ be a ring.
We denote by $\Mod R$ the category of $R$-modules, by $\mod R$ the category of finitely generated $R$-modules, and by $\D(\Mod R)$ the (unbounded) derived category of $\Mod R$.
Note that there are inclusions $\mod R\subseteq\Mod R\subseteq\D(\Mod R)$.
We use well-known facts on local cohomology basically tacitly; we refer the reader to \cite{BS}.
We omit subscripts and superscripts as long as there is no danger of confusion.
\end{conv}

We recall the definitions of the support and the small supports of a complex of modules.

\begin{dfn}\cite{Fox}
For $X \in \D(\Mod R)$ we define the {\em support} of $X$ by $\Supp X= \{\p \in \Spec R \mid X_\p \not\cong 0 \}$, and the {\em small support} of $X$ by $\supp X= \{\p \in \Spec R \mid X \ltensor_R \kappa(\p) \not\cong 0\}$, where $\kappa(\p)=R_\p/\p R_\p$.
\end{dfn}

We denote by $\e_R(M)$ the injective hull of an $R$-module $M$.
Let $X$ be a complex of $R$-modules with $\H^{\ll0}(X)=0$.
Then one can take a {\em minimal injective resolution}
$$
\E_R(X)=(0\to\cdots\xrightarrow{\partial^{i-1}}\E_R^i(X)\xrightarrow{\partial^i}\E_R^{i+1}(X)\xrightarrow{\partial^{i+1}}\cdots)
$$
of $X$, that is, a bounded below complex of injective $R$-modules quasi-isomorphic to $X$ such that $\E_R^i(X)$ is the injective hull of $\Ker\partial^i$ for all $i$.
Recall that the $i$th {\it Bass number} $\mu_i(\p, M)$ of an $R$-module $M$ with respect to a prime ideal $\p$ of $R$ is defined as the dimension of $\Ext_{R_\p}^i(\kappa(\p), M_\p)$ as a $\kappa(\p)$-vector space.
Then $\mu_i(\p, M)$ is equal to the cardinality of the number of direct summands $\e_R(R/\p)$ of $\E_R^i(M)$; see \cite[Theorem 18.7]{M}.
Thus, there is a direct sum decomposition into indecomposable injective modules $\E^i_R(M) \cong \bigoplus_{\p \in \Spec R} \e_R(R/\p)^{\oplus \mu_i(\p, M)}$.

For a subset $\Phi$ of $\Spec R$, let $\cl(\Phi)$ be the set of prime ideals $\p$ of $R$ such that $\q\subseteq\p$ for some $\q\in\Phi$.
This is called the {\em specialization closure}, since it is the smallest specialization-closed subset of $\Spec R$ containing $\Phi$.
We state fundamental properties of supports, small supports and associated primes.

\begin{prop}\label{sup0}
\begin{enumerate}[\rm(1)]
\item
For any $X \in \D(\Mod R)$ one has $\supp X \subseteq \cl(\supp X) \subseteq \Supp X$.
\item
Let $X\in\D(\Mod R)$.
Let $I$ be a complex of injective $R$-modules quasi-isomorphic to $X$.
Then $\supp X\subseteq\bigcup_{i\in\Z}\Ass I^i$.
The equality holds if $\H^{\ll0}(X)=0$ and $I=\E(X)$.
In particular, for each $R$-module $M$ there is an inclusion $\Ass M\subseteq\supp M$, whose equality holds if $M$ is injective.
\item
For every $M\in\Mod R$ it holds that $\cl(\Ass M)  = \cl(\supp M) = \Supp M$.
If $M\in\mod R$, then $\supp M=\Supp M$.
\item
Let $X \to Y \to Z \to X[1]$ be an exact triangle in $\D(\Mod R)$.
Then one has $\supp Y \subseteq \supp X \cup \supp Z$.
\item
For a family $\{X_\lambda\}_{ \lambda \in \Lambda}$ in $\D(\Mod R)$ one has $\supp (\bigoplus_{\lambda \in \Lambda} X_\lambda) = \bigcup_{\lambda \in \Lambda} \supp X_\lambda$.
\item
Let $0 \to L \to M \to N \to 0$ be an exact sequence in $\Mod R$.
Then there are an equality $\Supp M=\Supp L\cup\Supp N$, and inclusions $\Ass L \subseteq \Ass M$ and $\Ass M \subseteq \Ass L \cup \Ass N$.
\item
For a family $\{M_\lambda\}_{ \lambda \in \Lambda}$ in $\Mod R$ one has $\Supp(\bigoplus_{\lambda \in \Lambda} M_\lambda) = \bigcup_{\lambda \in \Lambda} \Supp M_\lambda$ and $\Ass (\bigoplus_{\lambda \in \Lambda} M_\lambda) = \bigcup_{\lambda \in \Lambda} \Ass M_\lambda$.
\end{enumerate}
\end{prop}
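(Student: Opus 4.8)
The plan is to reduce each assertion to a local base ring and the structure of injective modules; the only genuinely nontrivial ingredient is a comparison theorem over local rings, isolated at the end. Two tools are used throughout: for $X\in\D(\Mod R)$ and $\p\in\Spec R$ there is a natural isomorphism $X\ltensor_R\kappa(\p)\cong X_\p\ltensor_{R_\p}\kappa(\p)$ (since $R\to R_\p$ is flat and $\kappa(\p)$ is an $R_\p$-module), and the functor $-\ltensor_R\kappa(\p)$ is triangulated and commutes with arbitrary coproducts. Granting these, (4) and (5) follow at once by applying $-\ltensor_R\kappa(\p)$ to an exact triangle, respectively a coproduct, and using that a cone of $0\to0$ is zero. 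Parts (6) and (7) are the standard behaviour of $\Supp$ and $\Ass$ over a noetherian ring --- exactness of localization, the usual case analysis for the middle term of a short exact sequence, and the fact that $R/\p$ is cyclic so a monomorphism into a coproduct factors through a finite subcoproduct --- and I would just cite \cite{M}. For (1), the inclusion $\supp X\subseteq\cl(\supp X)$ is the definition of $\cl$; the isomorphism above gives $\p\in\supp X\Rightarrow X_\p\not\cong0$, so $\supp X\subseteq\Supp X$; and $\Supp X=\bigcup_i\Supp\H^i(X)$ is a union of specialization-closed sets, hence specialization-closed, so $\cl(\supp X)\subseteq\cl(\Supp X)=\Supp X$.

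The substance is (2). For the inclusion, I would fix $\p\notin\bigcup_i\Ass I^i$ and show $\kappa(\p)\ltensor_{R_\p}X_\p\cong0$. Localizing at $\p$, the complex $I_\p$ consists of injective $R_\p$-modules and is quasi-isomorphic to $X_\p$; decomposing each term into indecomposables, every $I_\p^i$ is a coproduct of copies of modules $\e_{R_\p}(R_\p/\q R_\p)$ with $\q\subsetneq\p$ --- the summand for $\q=\p$ is absent because $\p\notin\Ass I^i$ (recall $\Ass\e_R(R/\q)=\{\q\}$), and those for $\q\not\subseteq\p$ are absent because $\e_R(R/\q)_\p=0$. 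Since $\e_{R_\p}(R_\p/\q R_\p)$ is an $R_\q$-module for $\q\subsetneq\p$ and $\kappa(\p)_\q=0$ (as $\p\not\subseteq\q$), we get $\kappa(\p)\ltensor_{R_\p}\e_{R_\p}(R_\p/\q R_\p)\cong\kappa(\p)_\q\ltensor_{R_\q}\e_{R_\p}(R_\p/\q R_\p)=0$, hence $\kappa(\p)\ltensor_{R_\p}I_\p^i=0$ for every $i$ (as $\ltensor$ commutes with coproducts). It remains to pass from the terms to the whole complex: writing $I_\p$ as the filtered colimit of its brutal truncations $\sigma^{\ge n}I_\p$ (equal to $I_\p$ in degrees $\ge n$ and zero below), each of which is bounded below, the spectral sequence of the double complex computing $\kappa(\p)\ltensor_{R_\p}\sigma^{\ge n}I_\p$ converges (the filtration being bounded below) and has vanishing first page, so $\kappa(\p)\ltensor_{R_\p}\sigma^{\ge n}I_\p=0$ for all $n$; taking the colimit, with which $\ltensor$ commutes, gives $\kappa(\p)\ltensor_{R_\p}I_\p=0$, i.e.\ $\p\notin\supp X$.

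For the equality, I would take $\H^{\ll0}(X)=0$ and $I=\E(X)$; by the inclusion it remains to show that $\p\in\Ass\E_R^i(X)$ for some $i$ implies $\p\in\supp X$. After localizing at $\p$, some $\E_{R_\p}^i(X_\p)$ has $\e_{R_\p}(\kappa(\p))$ as a summand, so $\Hom_{R_\p}(\kappa(\p),\E_{R_\p}^i(X_\p))\ne0$; by minimality of $\E_{R_\p}(X_\p)$ the socle of each term lies in the kernel of the differential, so the complex $\Hom_{R_\p}(\kappa(\p),\E_{R_\p}(X_\p))$ has zero differentials, and --- being $\R\Hom_{R_\p}(\kappa(\p),X_\p)$, since a bounded-below complex of injectives is K-injective --- we conclude $\R\Hom_{R_\p}(\kappa(\p),X_\p)\ne0$. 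Now I would invoke the comparison, valid over any local ring $(R_\p,\p R_\p,\kappa(\p))$, that $\R\Hom_{R_\p}(\kappa(\p),Y)\cong0$ if and only if $\kappa(\p)\ltensor_{R_\p}Y\cong0$ --- which one proves using the Koszul complex $K$ on a system of generators of $\p R_\p$: it lies in the thick subcategory generated by $\kappa(\p)$, it has $\kappa(\p)$ as a direct summand of $K\ltensor_{R_\p}\kappa(\p)$, and it is self-dual up to a shift, so $\R\Hom_{R_\p}(\kappa(\p),Y)\cong0\iff K\ltensor_{R_\p}Y\cong0\iff\kappa(\p)\ltensor_{R_\p}Y\cong0$ --- to get $\kappa(\p)\ltensor_{R_\p}X_\p\not\cong0$, i.e.\ $\p\in\supp X$. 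The ``in particular'' is the case $X=M$: then $\Ass M=\Ass\e_R(M)=\Ass\E_R^0(M)\subseteq\bigcup_i\Ass\E_R^i(M)=\supp M$, with equality for injective $M$ since then $\E_R(M)$ is concentrated in degree $0$.

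Finally (3) is obtained by assembling: $\Ass M\subseteq\supp M\subseteq\Supp M$ (from the ``in particular'' and (1)) gives $\cl(\Ass M)\subseteq\cl(\supp M)\subseteq\Supp M$, while $\Supp M\subseteq\cl(\Ass M)$ holds because a nonzero $M_\p$ has an associated prime, necessarily of the form $\q R_\p$ with $\q\in\Ass M$ and $\q\subseteq\p$; and for $M\in\mod R$, Nakayama's lemma applied to $M_\p$ gives $\Supp M\subseteq\supp M$. The hard part will be the local comparison $\R\Hom_{R_\p}(\kappa(\p),-)\simeq0\iff\kappa(\p)\ltensor_{R_\p}-\simeq0$ needed for the equality in (2); everything else is localization bookkeeping, the only delicate point being the handling of unbounded complexes of injectives in the inclusion. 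One could sidestep both by quoting the known description $\supp X=\{\p\mid\mu_i(\p,X)\ne0\text{ for some }i\}$ of the small support in terms of Bass numbers from \cite{Fox}.
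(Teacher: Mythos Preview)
Your argument is considerably more detailed than the paper's, which for part (2) simply cites Chen--Iyengar \cite{CI} and for the finitely generated case of (3) cites Christensen \cite{C}; for (1) and (3)--(7) your reasoning matches the paper's. Your treatment of the equality in (2) via $\R\Hom$ and the Koszul comparison is correct and is essentially the argument in the literature.

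There is, however, a gap in your proof of the \emph{inclusion} in (2). For $J=\sigma^{\ge n}I_\p$ and $F$ a free resolution of $\kappa(\p)$, you claim the spectral sequence of the double complex $F\otimes J$ converges because ``the filtration is bounded below''. But $F$ is only bounded above and $J$ is typically unbounded above, so for each total degree $m$ the set $\{(p,q):p+q=m,\ p\le0,\ q\ge n\}$ is infinite; the row filtration (by $q$) is exhaustive but not complete, and in this shape the Acyclic Assembly Lemma gives only that $\operatorname{Tot}^{\Pi}$ is acyclic, whereas $\kappa(\p)\ltensor J$ is computed by $\operatorname{Tot}^{\oplus}$. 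The fix is immediate with your own tools: either further brutally truncate to the bounded complexes $\sigma^{\le m}J$ (for which convergence is trivial) and pass to the colimit in $m$ exactly as you already did in $n$; or --- more elegantly --- note that $J$, being a bounded-below complex of injectives, is K-injective, and $\Hom_{R_\p}(\kappa(\p),J^q)=0$ termwise since $\e_{R_\p}(\kappa(\p))$ does not occur, so $\R\Hom(\kappa(\p),J)=0$, and then apply the Koszul comparison you already established to conclude $\kappa(\p)\ltensor J=0$. Your final remark that one could sidestep this by citing Foxby's description of $\supp$ via Bass numbers is exactly what the paper does, via \cite{CI}.
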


\begin{proof}
The first inclusion in (1) is clear, while the second follows from the fact that $\Supp X$ is specialization-closed and contains $\supp X$.
Assertion (2) follows from \cite[Proposition 2.1 and Remark 2.2]{CI}.
The equality and the first inclusion in (6) are obvious, while the second inclusion is shown in \cite[Theorem 6.3]{M}.
Assertions (4) and (5) are straightforward.

Let us show (3).
Let $M\in\Mod R$.
By (1) and (2) we have $\cl(\Ass M)\subseteq\cl(\supp M)\subseteq\Supp M$.
For each $\p \in \Supp M$ the set $\Ass M_\p$ is nonempty, and we find $\q \in \Ass M$ such that $\q \subseteq \p$; see \cite[Theorem 6.2]{M}.
Hence $\p$ belongs to $\cl(\Ass M)$.
If $M$ is finitely generated, then it is observed from \cite[Corollary A.4.16]{C} that $\supp M=\Supp M$.

Now we show (7).
For each $\mu\in\Lambda$ the inclusion $M_\mu\subseteq\bigoplus_{\lambda\in\Lambda}M_\lambda$ shows $\Ass M_\mu\subseteq\Ass(\bigoplus_{\lambda\in\Lambda}M_\lambda)$, which implies $\bigcup_{\mu\in\Lambda}\Ass M_\mu\subseteq\Ass(\bigoplus_{\lambda\in\Lambda}M_\lambda)$.
Let $\p\in\Ass(\bigoplus_{\lambda\in\Lambda}M_\lambda)$.
Then there is a monomorphism $R/\p\hookrightarrow\bigoplus_{\lambda\in\Lambda}M_\lambda$, which factors through a submodule $\bigoplus_{i=1}^nM_{\lambda_i}$ of $\bigoplus_{\lambda\in\Lambda}M_\lambda$ for some finitely many indices $\lambda_1,\dots,\lambda_n\in\Lambda$.
Hence $\p\in\Ass(\bigoplus_{i=1}^nM_{\lambda_i})\subseteq\bigcup_{i=1}^n\Ass M_{\lambda_i}\subseteq\bigcup_{\lambda\in\Lambda}\Ass M_\lambda$, where the first inclusion follows from applying (6) to the split exact sequence $0\to M_{\lambda_j}\to\bigoplus_{i=j}^nM_{\lambda_i}\to\bigoplus_{i=j+1}^nM_{\lambda_i}\to0$ for $1\le j\le n-1$.
\end{proof}

We often use the following notation throughout the paper.

\begin{nota}\label{101}
For $\Theta\in\{\Supp,\supp,\Ass\}$ we denote by $\Theta^{-1}(\Phi)$ the subcategory of $\Mod R$ consisting of modules $X$ with $\Theta(X)\subseteq\Phi$, and put $\Theta^{-1}_\f(\Phi)=\Theta^{-1}(\Phi)\cap\mod R$.
\end{nota}

The following statements are direct consequences of Proposition \ref{sup0}.

\begin{cor}\label{sup1}
Let $\Phi$ be a subset of $\Spec R$.
\begin{enumerate}[\rm(1)]
\item
$\supp_\f^{-1}(\Phi) = \Supp_\f^{-1}(\Phi)$ is a Serre subcategory of $\mod R$, while $\Supp^{-1}(\Phi)$ is a localizing subcategory of $\Mod R$.
\item
$\supp^{-1}(\Phi)$ is closed under direct sums, direct summands and extensions.
\item
The subcategory $\Ass_\f^{-1}(\Phi)$ of $\mod R$ is closed under extensions and submodules, while the subcategory $\Ass^{-1}(\Phi)$ of $\Mod R$ is closed under direct sums, extensions and submodules.
\item
There are inclusions $\Supp^{-1}(\Phi)\subseteq \supp^{-1}(\Phi) \subseteq\Ass^{-1}(\Phi)$ of subcategories of $\Mod R$.
\end{enumerate}
\end{cor}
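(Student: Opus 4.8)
The plan is to read off each of the four assertions directly from the matching items of Proposition \ref{sup0}, simply translating an equality or inclusion of supports, small supports, or associated primes into the corresponding closure property of the preimage subcategory (cf.\ Notation \ref{101}).

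For (1), I would first invoke Proposition \ref{sup0}(3), whose equality $\supp M=\Supp M$ for finitely generated $M$ gives $\supp_\f^{-1}(\Phi)=\Supp_\f^{-1}(\Phi)$ outright. To see that this subcategory is Serre, take a short exact sequence $0\to L\to M\to N\to0$ in $\mod R$: Proposition \ref{sup0}(6) provides $\Supp M=\Supp L\cup\Supp N$, which yields $\Supp L\subseteq\Supp M$ and $\Supp N\subseteq\Supp M$ (closure under submodules and quotients) and also $\Supp M\subseteq\Phi$ whenever $\Supp L,\Supp N\subseteq\Phi$ (closure under extensions). The very same use of Proposition \ref{sup0}(6) shows $\Supp^{-1}(\Phi)$ is a Serre subcategory of $\Mod R$, and Proposition \ref{sup0}(7) (the equality $\Supp(\bigoplus M_\lambda)=\bigcup\Supp M_\lambda$) then gives closure under arbitrary direct sums, so $\Supp^{-1}(\Phi)$ is localizing.

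For (2) I would run the identical argument with $\supp$ replacing $\Supp$: closure under direct sums is Proposition \ref{sup0}(5), closure under direct summands follows from it (a summand's small support is contained in that of the whole module), and closure under extensions comes from applying Proposition \ref{sup0}(4) to the exact triangle in $\D(\Mod R)$ attached to a short exact sequence of modules, giving $\supp Y\subseteq\supp X\cup\supp Z$. For (3), Proposition \ref{sup0}(6) gives $\Ass L\subseteq\Ass M$ and $\Ass M\subseteq\Ass L\cup\Ass N$, i.e.\ closure under submodules and extensions for both $\Ass_\f^{-1}(\Phi)$ and $\Ass^{-1}(\Phi)$, while Proposition \ref{sup0}(7) ($\Ass(\bigoplus M_\lambda)=\bigcup\Ass M_\lambda$) adds closure under direct sums in the big case. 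Finally, (4) is immediate: the chain $\supp X\subseteq\cl(\supp X)\subseteq\Supp X$ of Proposition \ref{sup0}(1) gives $\Supp^{-1}(\Phi)\subseteq\supp^{-1}(\Phi)$, and the inclusion $\Ass M\subseteq\supp M$ from Proposition \ref{sup0}(2) gives $\supp^{-1}(\Phi)\subseteq\Ass^{-1}(\Phi)$.

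I do not expect any genuine obstacle here; every step is a one-line consequence of a property already recorded in Proposition \ref{sup0}. The only minor points demanding a word of care are that a short exact sequence of modules really does induce an exact triangle in $\D(\Mod R)$ (so that Proposition \ref{sup0}(4) is applicable to the extension-closedness in (2)), and that ``closed under direct summands'' in (2) must be extracted from ``closed under direct sums'' via the equality in Proposition \ref{sup0}(5) rather than taken for granted.
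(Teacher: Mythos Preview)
Your proposal is correct and matches the paper's approach exactly: the paper simply states that these are direct consequences of Proposition \ref{sup0}, and you have unpacked each item precisely as intended, citing the same parts of that proposition for the same purposes.
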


Now we can give a characterization of specialization-closed subsets in terms of closure properties of subcategories.

\begin{thm}\label{spcl}
The following are equivalent for any subset $\Phi$ of $\Spec R$.
\setlength\multicolsep{0pt}
\begin{multicols}{2}
\begin{enumerate}[\rm(1)]
\item
$\Phi$ is specialization-closed.
\item
$\Ass_\f^{-1}(\Phi)$ is closed under quotient modules.
\item
$\Ass_\f^{-1}(\Phi)$ is Serre.
\item
$\supp^{-1}(\Phi)$ is closed under submodules.
\item
$\supp^{-1}(\Phi)$ is localizing.
\item
$\Ass^{-1}(\Phi)$ is closed under quotient modules.
\item
$\Ass^{-1}(\Phi)$ is localizing.
\item
$\Ass_\f^{-1}(\Phi) = \Supp^{-1}_\f(\Phi)$.
\item
$\Ass^{-1}(\Phi) = \supp^{-1}(\Phi)$.
\item
$\supp^{-1}(\Phi) =  \Supp^{-1}(\Phi)$.
\item
$\Ass^{-1}(\Phi) = \Supp^{-1}(\Phi)$.
\end{enumerate}
\end{multicols}
\end{thm}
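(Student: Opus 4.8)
The plan is to prove $(1)\Rightarrow(j)$ for every $j\in\{2,\dots,11\}$ and $(j)\Rightarrow(1)$ for every such $j$, using nothing beyond Proposition \ref{sup0} and Corollary \ref{sup1}. The engine for both directions is the identity $\cl(\Ass M)=\cl(\supp M)=\Supp M$, valid for all $M\in\Mod R$ (Proposition \ref{sup0}(3)), together with the elementary remark that $\Phi$ is specialization-closed precisely when $\cl(\Phi)=\Phi$.

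For the forward direction, assume $\cl(\Phi)=\Phi$. For an $R$-module $M$ we then have $\Ass M\subseteq\Phi\iff\supp M\subseteq\Phi\iff\Supp M\subseteq\Phi$: the chain $\Ass M\subseteq\supp M\subseteq\Supp M$ (Proposition \ref{sup0}(1),(2)) gives the rightward implications, and if $\Ass M\subseteq\Phi$ then $\Supp M=\cl(\Ass M)\subseteq\cl(\Phi)=\Phi$ gives the converse. Hence $\Ass^{-1}(\Phi)=\supp^{-1}(\Phi)=\Supp^{-1}(\Phi)$, which is exactly (9), (10) and (11); intersecting with $\mod R$ and using the equality $\supp_\f^{-1}(\Phi)=\Supp_\f^{-1}(\Phi)$ of Corollary \ref{sup1}(1) yields (8). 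Because $\Supp^{-1}(\Phi)$ is localizing and $\Supp_\f^{-1}(\Phi)$ is Serre (Corollary \ref{sup1}(1)), these equalities upgrade to (5), (7) and (3); finally (2), (4), (6) are weakenings of (3), (5), (7) respectively.

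For the reverse direction I argue by contraposition: suppose $\Phi$ is not specialization-closed and fix $\p\in\Phi$ and $\q\in\Spec R$ with $\p\subsetneq\q$ and $\q\notin\Phi$. The module $R/\p$ has $\Ass(R/\p)=\{\p\}\subseteq\Phi$ but $\supp(R/\p)=\Supp(R/\p)=\V(\p)\ni\q$, so it lies in $\Ass_\f^{-1}(\Phi)$ (hence in $\Ass^{-1}(\Phi)$) but in none of $\supp^{-1}(\Phi)$, $\Supp^{-1}(\Phi)$, $\Supp_\f^{-1}(\Phi)$, contradicting (8), (9), (11). The quotient module $R/\q$ of $R/\p$ has $\Ass(R/\q)=\{\q\}\not\subseteq\Phi$, so it lies in neither $\Ass_\f^{-1}(\Phi)$ nor $\Ass^{-1}(\Phi)$, contradicting closure under quotient modules and hence (2), (3), (6), (7). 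Finally $E=\e_R(R/\p)$ is injective with $\Ass E=\{\p\}$, so $\supp E=\Ass E=\{\p\}\subseteq\Phi$ by Proposition \ref{sup0}(2), i.e. $E\in\supp^{-1}(\Phi)$; but its submodule $R/\p$ is not in $\supp^{-1}(\Phi)$, and $\Supp E\supseteq\Supp(R/\p)=\V(\p)\ni\q$ by Proposition \ref{sup0}(6), so $\supp^{-1}(\Phi)$ is not closed under submodules and $E\in\supp^{-1}(\Phi)\setminus\Supp^{-1}(\Phi)$, contradicting (4), (5) and (10). This exhausts $(2)$–$(11)$.

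I do not expect a serious obstacle here: the argument is essentially bookkeeping on top of Proposition \ref{sup0}. The one point demanding attention is the reverse direction for (4), (5) and (10), which all concern $\supp^{-1}(\Phi)$; since $\supp$ and $\Supp$ coincide on finitely generated modules, no finitely generated witness can separate them, and one is forced to use the genuinely infinitely generated module $\e_R(R/\p)$, whose small support $\{\p\}$ is as small as possible while its support contains $\V(\p)$.
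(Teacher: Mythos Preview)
Your proof is correct and complete. The main structural difference from the paper's argument is that the paper organizes the equivalences as a chain $(7)\Leftrightarrow(6)\Leftrightarrow(1)\Leftrightarrow(2)\Leftrightarrow(3)\Leftarrow(8)$ and $(1)\Rightarrow(11)\Rightarrow(10)\Rightarrow(5)\Rightarrow(4)$, $(11)\Rightarrow(9)\Rightarrow(4)$, closing the loop with $(4)\Rightarrow(1)$; in particular, it outsources the implications $(6)\Leftrightarrow(1)$ and $(1)\Leftrightarrow(2)$ to the literature (\cite[Corollary~2.7]{Kra08} and \cite[Theorem~4.1]{wide}). You instead prove $(1)\Rightarrow(j)$ and $(j)\Rightarrow(1)$ directly for every $j$, and your contrapositive witnesses $R/\p\twoheadrightarrow R/\q$ and $R/\p\hookrightarrow\e_R(R/\p)$ give a fully self-contained treatment that dispenses with those citations. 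The paper's $(4)\Rightarrow(1)$ step is essentially your argument for $(4)$ run forwards rather than by contraposition, so the two proofs genuinely coincide on that implication; the gain of your approach is that it stays entirely within Proposition~\ref{sup0} and Corollary~\ref{sup1}, while the paper's chain structure makes the logical dependencies among the eleven conditions more visible.
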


\begin{proof}
The implications (7) $\Leftrightarrow$ (6) $\Leftrightarrow$ (1) $\Leftrightarrow$ (2) $\Leftrightarrow$ (3) $\Leftarrow$ (8) hold by Corollary \ref{sup1}(3), \cite[Corollary 2.7]{Kra08} (see also \cite[page 425]{Gab}) and \cite[Theorem 4.1]{wide}.
If $\Phi$ is specialization-closed, then the equality $\cl(\Ass M) = \Supp M$ for $M \in \mod R$ shows $\Ass_\f^{-1}(\Phi) = \Supp^{-1}_\f(\Phi)$.
This proves the implication (1) $\Rightarrow$ (8).
Using the assertions of Corollary \ref{sup1} for $\Mod R$ and Proposition \ref{sup0}, we can easily check that the implications (1) $\Rightarrow$ (11) $\Rightarrow$ (10) $\Rightarrow$ (5) $\Rightarrow$ (4) and (11) $\Rightarrow$ (9) $\Rightarrow$ (4) hold.

It remains to show the implication (4) $\Rightarrow$ (1).
Assume (4) and take $\p \in \Phi$.
Then $R/\p \subseteq\e_R(R/\p) \in \supp^{-1}(\Phi)$, and hence $R/\p \in \supp^{-1}(\Phi)$.
Therefore $\V(\p)=\Supp R/\p=\supp R/\p\subseteq\Phi$, and thus $\Phi$ is specialization-closed.
\end{proof}

\section{Local cohomology with respect to a specialization-closed subset}

First of all, we introduce the torsion functor and the local cohomology functor with respect to a specialization-closed subset, extending the torsion functor and the local cohomology functor with respect to an ideal.

\begin{dfn}
Let $\Phi$ be a subset of $\Spec R$.	
We define the {\it $\Phi$-torsion functor} $\Gamma_{\Phi}: \Mod R \to \Mod R$ by $\Gamma_\Phi(M)= \{x \in M \mid \Supp(Rx) \subseteq \Phi \}$ for $M\in\Mod R$.
If $\Phi$ is specialization-closed, then we have natural isomorphisms $\Gamma_\Phi(-) \cong \varinjlim_{\V(I)\subseteq\Phi} \Gamma_I(-)\cong\bigcup_{\V(I)\subseteq\Phi}\Gamma_I(-)$.
The first isomorphism says that $\Gamma_\Phi: \Mod R \to \Mod R$ is a left exact functor, and for each integer $n$ we can consider its $n$th right derived functor $\H^n_\Phi: \Mod R \to \Mod R$, which we call the $n$th {\em local cohomology functor} with respect to $\Phi$.
There is a natural isomorphism $\H^n_\Phi(-)\cong \varinjlim_{\V(I)\subseteq\Phi} \H^n_I(-)$.
\end{dfn}

We state several basic properties of the torsion and local cohomology functors with respect to a specialization-closed subset $\Phi$, which are well-known in the case where $\Phi$ is closed.

\begin{prop}\label{1}
Let $\Phi$ be a specialization-closed subset of $\Spec R$.
\begin{enumerate}[\rm(1)]
\item
For an $R$-module $M$, there are implications\\
{\rm(a)} $\Gamma_\Phi(M)=M\Leftrightarrow\Supp M\subseteq\Phi\Leftrightarrow\Ass M\subseteq\Phi\Rightarrow\H_\Phi^{>0}(M)=0$,\qquad
{\rm(b)} $\Gamma_\Phi(M)=0\Leftrightarrow\Ass M\subseteq\Phi^\complement$.
\item
For an integer $n$, one has $\Supp \H_\Phi^n(M) \subseteq \Phi$, $\Gamma_\Phi(\H_\Phi^n(M))=\H_\Phi^n(M)$ and $\H_\Phi^{>0}(\H_\Phi^n(M)) = 0$.
\item
One has $\Gamma_\Phi(M/\Gamma_\Phi(M))=0$ and $\H_\Phi^i(M/\Gamma_\Phi(M))\cong\H_\Phi^i(M)$ for all $i>0$.
\item
For a family $\{M_\lambda\}_{\lambda\in\Lambda}$ in $\Mod R$ and an integer $n$, there is a natural isomorphism $\H_\Phi^n(\bigoplus_{\lambda\in\Lambda}M_\lambda)\cong\bigoplus_{\lambda\in\Lambda}\H_\Phi^n(M_\lambda)$.
\item
Let $\p$ be a prime ideal of $R$.
Let $\Phi$ be a specialization-closed subset of $\Spec R$, and put
$$
\Phi_\p=\{P\in\Spec R_\p\mid P\cap R\in\Phi\}.
$$
Then one has an isomorphism $\H_\Phi^n(M)_\p\cong\H_{\Phi_\p}^n(M_\p)$ for all $R$-modules $M$ and integers $n$.
\end{enumerate}
\end{prop}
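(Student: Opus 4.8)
The plan is to deduce all five statements from a single device: a \emph{minimal} injective resolution $\E_R(M)$ of an $R$-module $M$, together with the observation that $\Gamma_\Phi$ is completely transparent on injective modules. Concretely, if $E\cong\bigoplus_{\q\in\Spec R}\e_R(R/\q)^{(\Lambda_\q)}$ is an injective $R$-module, then since $\Supp\e_R(R/\q)=\V(\q)$ and $\Phi$ is specialization-closed one has $\Gamma_\Phi(E)\cong\bigoplus_{\q\in\Phi}\e_R(R/\q)^{(\Lambda_\q)}$; in particular $\Gamma_\Phi(E)$ is again injective, so for every $M$ the complex $\Gamma_\Phi(\E_R(M))$ consists of injectives, its $n$th term is a submodule of $\E_R^n(M)$ with support contained in $\Phi$, and $\H_\Phi^n(M)=\H^n(\Gamma_\Phi(\E_R(M)))$.

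For (1), I would first record that $\Supp M=\bigcup_{x\in M}\V(\ann x)$ is specialization-closed, so by Proposition \ref{sup0}(3) the conditions $\Supp M\subseteq\Phi$ and $\Ass M\subseteq\Phi$ are equivalent, and both are equivalent to $\Gamma_\Phi(M)=M$ straight from the definition of $\Gamma_\Phi$. When $\Supp M\subseteq\Phi$, the Bass numbers $\mu_i(\q,M)$ vanish for $\q\notin\Supp M$, so every term of $\E_R(M)$ has support inside $\Supp M\subseteq\Phi$, whence $\Gamma_\Phi(\E_R(M))=\E_R(M)$ and $\H_\Phi^{>0}(M)=\H^{>0}(M)=0$. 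For (b): if $\Gamma_\Phi(M)=0$ and some $\p\in\Ass M$ lay in $\Phi$, a cyclic submodule $Rx\cong R/\p$ would satisfy $\Supp(Rx)=\V(\p)\subseteq\Phi$ and hence lie in $\Gamma_\Phi(M)=0$, a contradiction; conversely $\Gamma_\Phi(M)\subseteq M$ has support in $\Phi$, so $\Ass\Gamma_\Phi(M)\subseteq\Ass M\cap\Phi$ by Proposition \ref{sup0}(6), which is empty once $\Ass M\subseteq\Phi^\complement$. Part (2) is then immediate: $\H_\Phi^n(M)$ is a subquotient of $\Gamma_\Phi(\E_R^n(M))$, so has support in $\Phi$, and the remaining two assertions follow by applying (1)(a) to $\H_\Phi^n(M)$. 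For (3), put $N=\Gamma_\Phi(M)$; by (1)(a) one has $\Gamma_\Phi(N)=N$ and $\H_\Phi^{>0}(N)=0$, so the long exact cohomology sequence of $0\to N\to M\to M/N\to0$ begins $0\to N\xrightarrow{\mathrm{id}}N\to\Gamma_\Phi(M/N)\to\H_\Phi^1(N)=0$, giving $\Gamma_\Phi(M/N)=0$, and then collapses to $\H_\Phi^i(M)\cong\H_\Phi^i(M/N)$ for all $i>0$.

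For (4), note that $\Gamma_\Phi$ commutes with arbitrary direct sums (a finitely supported tuple has support equal to the union of the supports of its entries), and over a noetherian ring $\bigoplus_\lambda\E_R(M_\lambda)$ is an injective resolution of $\bigoplus_\lambda M_\lambda$; applying $\Gamma_\Phi$ and using that cohomology commutes with direct sums yields the isomorphism (alternatively, combine $\H_\Phi^n\cong\varinjlim_{\V(I)\subseteq\Phi}\H_I^n$ with the additivity of $\H_I^n$ over a noetherian ring). For (5), I would first check that $\Phi_\p$ is specialization-closed in $\Spec R_\p$ (if $P\subseteq P'$ then $P\cap R\subseteq P'\cap R$), then use that localization at $\p$ carries the minimal injective resolution $\E_R(M)$ to the minimal injective resolution $\E_{R_\p}(M_\p)$, since $(\e_R(R/\q))_\p$ is $\e_{R_\p}(R_\p/\q R_\p)$ if $\q\subseteq\p$ and $0$ otherwise. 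It then suffices to intertwine the two torsion functors, i.e. to produce a natural isomorphism $(\Gamma_\Phi E)_\p\cong\Gamma_{\Phi_\p}(E_\p)$ for injective $E$; by additivity this reduces to $E=\e_R(R/\q)$, where both sides equal $\e_{R_\p}(R_\p/\q R_\p)$ exactly when $\q\subseteq\p$ and $\q\in\Phi$ (using $\q R_\p\in\Phi_\p\Leftrightarrow\q\in\Phi$ for such $\q$) and vanish otherwise. Since localization is exact, passing to cohomology then gives $\H_\Phi^n(M)_\p\cong\H_{\Phi_\p}^n(M_\p)$.

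The routine ingredients here are the transparency of $\Gamma_\Phi$ on injectives and the standard facts that over a noetherian ring injectivity is preserved by arbitrary direct sums and that minimal injective resolutions localize correctly. The step needing the most care is (5): one must verify that localization intertwines $\Gamma_\Phi$ with $\Gamma_{\Phi_\p}$, which rests entirely on the bookkeeping identity $\q\in\Phi\Leftrightarrow\q R_\p\in\Phi_\p$ for primes $\q\subseteq\p$ and on checking its compatibility with the indecomposable-injective decompositions on both sides.
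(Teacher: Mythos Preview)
Your proof is correct and, for parts (1)--(4), follows essentially the same line as the paper: both arguments rest on the decomposition of injective modules into indecomposables and the fact that $\Gamma_\Phi$ acts transparently on such a decomposition. Your treatment of (3) via the long exact sequence of $0\to\Gamma_\Phi(M)\to M\to M/\Gamma_\Phi(M)\to 0$ is a little more explicit than the paper's one-line ``easy to deduce from (2)'', but the content is the same.

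The genuine difference is in (5). The paper argues through the colimit description $\H_\Phi^n\cong\varinjlim_{\V(I)\subseteq\Phi}\H_I^n$: it sets up mutually inverse inclusion-preserving bijections between the closed subsets $Z\subseteq\Phi$ of $\Spec R$ containing $\p$ and the nonempty closed subsets $W\subseteq\Phi_\p$ of $\Spec R_\p$, then passes the direct limit through localization. Your route instead localizes the minimal injective resolution itself, using that $\E_R(M)_\p\cong\E_{R_\p}(M_\p)$, and checks on indecomposable injectives that the natural inclusion $(\Gamma_\Phi(-))_\p\hookrightarrow\Gamma_{\Phi_\p}((-)_\p)$ is an isomorphism. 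Both are valid; the paper's colimit approach avoids any structure theory of injectives beyond what is already implicit in the classical isomorphism $\H_I^n(M)_\p\cong\H_{IR_\p}^n(M_\p)$, whereas your approach keeps the entire proposition anchored to the single device announced at the outset and makes the compatibility $(\Gamma_\Phi E)_\p\cong\Gamma_{\Phi_\p}(E_\p)$ visibly a statement about Bass numbers. One small point worth making explicit in a full write-up: the ``additivity'' reduction in (5) is over possibly infinite direct sums, so you are tacitly using that both localization and $\Gamma_\Phi$ (and hence the natural comparison map between the two composites) commute with arbitrary coproducts.
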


\begin{proof}
(1a) Note that $\Supp M=\bigcup_{x\in M}\Supp Rx$.
This implies that $\Gamma_\Phi(M)=M$ if and only if $\Supp M\subseteq\Phi$, if and only if $\Ass M\subseteq\Phi$ since $\Phi$ is specialization-closed.
When $\Supp M\subseteq\Phi$, one has $\Gamma_\Phi(\E^n(M))=\E^n(M)$ for all $n \ge 0$ by Proposition \ref{sup0}(1)(2).
Hence $\Gamma_\Phi(\E(M)) = \E(M)$, which implies $\H_\Phi^{>0}(M)= 0$.
Thus the first assertion follows.

(1b) If $x$ is a nonzero element of $M$ with $\Supp Rx\subseteq\Phi$, then $\emptyset\ne\Ass Rx\subseteq\Supp Rx$ and hence $\Ass Rx\cap\Phi$ is nonempty.
If $\p$ is a prime ideal in $\Ass M\cap\Phi$, then $\p=\ann(x)$ for some $x\in M$.
As $R/\p\cong Rx$, we have $\Ass Rx=\{\p\}\subseteq\Phi$, which implies $\Supp Rx\subseteq\Phi$ since $\Phi$ is specialization-closed.
Now the second assertion follows.

(2) By (1) a prime ideal $\p$ is in $\Phi$ if and only if $\Gamma_\Phi(\e_R(R/\p))\ne0$, if and only if $\Gamma_\Phi(\e_R(R/\p)) = \e_R(R/\p)$.
We see that $\Supp \Gamma_\Phi(\E^n(M)) \subseteq \Phi$ for an $R$-module $M$ and an integer $n\ge0$.
Since $\Supp^{-1}_{\Mod R}(\Phi)$ is a localizing subcategory of $\Mod R$ by Corollary \ref{sup1}(1), we have $\Supp \H_\Phi^n(M) \subseteq \Phi$.
Therefore $\Gamma_\Phi(\H_\Phi^n(M))=\H_\Phi^n(M)$ and $\H_\Phi^{>0}(\H_\Phi^n(M))=0$ by (1a).

(3) The assertion is easy to deduce from assertion (2).

(4) The functor $\Gamma_\Phi=\varinjlim_{\V(I)\subseteq\Phi}\Gamma_I$ commutes with direct sums of modules.
As $R$ is noetherian, $\bigoplus_{\lambda\in\Lambda}\E(M_\lambda)$ gives an injective resolution of $\bigoplus_{\lambda \in \Lambda} M_\lambda$; see \cite[Theorem 18.5]{M} for instance.
We obtain isomorphisms $\H_\Phi^n(\bigoplus_{\lambda\in\Lambda}M_\lambda) \cong \H^n(\Gamma_\Phi(\bigoplus_{\lambda \in \Lambda} \E(M_\lambda)))\cong \bigoplus_{\lambda \in \Lambda} \H^n(\Gamma_\Phi( \E(M_\lambda))) \cong \bigoplus_{\lambda\in\Lambda}\H_\Phi^n(M_\lambda)$.

(5) The assignments $\V(I)\mapsto\V(IR_\p)$ and $\V(J)\mapsto\V(J\cap R)$, where $I,J$ are ideals of $R,R_\p$ respectively, give mutually inverse inclusion-preserving bijections between the set $\AA$ of closed subsets $Z$ of $\Spec R$ with $\p\in Z\subseteq\Phi$ and the set $\BB$ of closed subsets $W$ of $\Spec R_\p$ with $\emptyset\ne W\subseteq\Phi_\p$.
Note that $\p\in Z$ if $\H_Z^n(M)_\p\ne0$, and $\H_W^n(M_\p)=0$ if $W=\emptyset$.
Thus
$$
\H_\Phi^n(M)_\p
\cong(\varinjlim_{Z\in\CC}\H_Z^n(M))_\p
\cong\varinjlim_{Z\in\CC}(\H_Z^n(M)_\p)
=\varinjlim_{Z\in\AA}(\H_Z^n(M)_\p)\\
\cong\varinjlim_{W\in\BB}(\H_W^n(M_\p))
=\varinjlim_{W\in\CC_\p}(\H_W^n(M_\p))
\cong\H_{\Phi_\p}^n(M_\p),
$$
where $\CC$ (resp. $\CC_\p$) stands for the set of closed subsets of $\Spec R$ (resp. $\Spec R_\p$) contained in $\Phi$ (resp. $\Phi_\p$).
\end{proof}

Next we define the cohomological dimension of a specialization-closed subset, which extends the celebrated invariant of the cohomological dimension of an ideal.
\begin{dfn}
Let $\Phi$ be a specialization-closed subset of $\Spec R$.
We define the {\it cohomological dimension} of $\Phi$ by
$$
\cd\Phi
=\sup\{i\in\Z\mid\H_\Phi^i(M)\ne0\text{ for some }M\in\Mod R\}
=\inf\{i\in\Z\mid\H_\Phi^{>i}(M)=0\text{ for all }M\in\Mod R\}
\in\NN\cup\{\infty,-\infty\}.
$$
For an ideal $I$ of $R$ we set $\cd I=\cd\V(I)$ and call it the {\em cohomological dimension} of $I$.
\end{dfn}

\begin{rem}\label{rem-}
Let $\Phi$ be a specialization-closed subset of $\Spec R$.
Then:\\
(1) $\cd\Phi=-\infty \Leftrightarrow \Phi=\emptyset$.\qquad
(2) $\cd\Phi\le 0\Leftrightarrow\Gamma_\Phi$ is exact.\qquad
(3) $\cd\Phi \le \dim R$.\qquad
(4) $\cd\Phi \le \sup\{\cd I\mid\V(I)\subseteq\Phi\}$.\\
Indeed, if $\p\in\Phi$, then $\H_\Phi^0(\e_R(R/\p))\ne0$ by Proposition \ref{1}(1b), which deduces (1).
Item (3) follows from Grothendieck's vanishing theorem \cite[6.1.2]{BS}, while (2) and (4) are clear.
\end{rem}

The following proposition is well-known in the case where $\Phi$ is closed.

\begin{prop}\label{cdr}
Let $\Phi$ be a specialization-closed subset of $\Spec R$.
Assume either that $R$ has finite Krull dimension or that $\Phi$ is closed.
Then there is an equality $\cd\Phi=\sup\{i\in\Z\mid\H_\Phi^i(R)\ne0\}$.
\end{prop}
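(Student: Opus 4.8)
The plan is to prove $\cd\Phi\le c$, where $c:=\sup\{i\in\Z:\H^i_\Phi(R)\ne0\}\in\NN\cup\{-\infty\}$; the reverse inequality $\cd\Phi\ge c$ is immediate from the definition of $\cd\Phi$ (take $M=R$). Write $e=\cd\Phi$. First I would record that, under either hypothesis, $e\ne+\infty$: if $\dim R<\infty$ this is Remark \ref{rem-}(3); if $\Phi=\V(I)$ is closed, then $\H^i_\Phi(-)=\H^i_I(-)$ vanishes for $i$ exceeding the number of generators of $I$ (compute with the \v{C}ech complex; this is the classical case, see \cite{BS}), so $e<\infty$. If moreover $\Phi=\emptyset$, then $e=-\infty=c$ by Remark \ref{rem-}(1) and we are done, so we may assume $e\in\NN$.

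Next I would run a comparison at the top degree $e$ against a free cover. Let $M\in\Mod R$ be arbitrary and choose a short exact sequence $0\to K\to F\to M\to0$ with $F$ a free $R$-module. The long exact sequence of the right derived functors $\H^\bullet_\Phi$ of $\Gamma_\Phi$ contains
$$
\H^e_\Phi(F)\longrightarrow\H^e_\Phi(M)\longrightarrow\H^{e+1}_\Phi(K),
$$
and $\H^{e+1}_\Phi(K)=0$ because $e+1>e=\cd\Phi$; hence $\H^e_\Phi(M)$ is a quotient of $\H^e_\Phi(F)\cong\bigoplus\H^e_\Phi(R)$, where the isomorphism is Proposition \ref{1}(4). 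If $e>c$ held, then $\H^e_\Phi(R)=0$ by definition of $c$, forcing $\H^e_\Phi(M)=0$ for every $R$-module $M$ and contradicting $e=\cd\Phi$. Therefore $e\le c$, and with the trivial inequality this gives $\cd\Phi=c$.

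The argument is short, and the only place the hypothesis is used --- hence the sole obstacle --- is in guaranteeing that $e=\cd\Phi$ is finite, so that there is a genuine top non-vanishing degree at which to compare $M$ with its free cover. Without such an input (for instance, for an infinite-dimensional $R$ and a non-closed $\Phi$), there need be no largest $i$ with $\H^i_\Phi(M)\ne0$, and the reduction to $R$ collapses. Everything else is a one-line use of Proposition \ref{1}(4) together with the long exact sequence for $\H^\bullet_\Phi$.
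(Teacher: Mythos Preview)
Your proof is correct and follows essentially the same approach as the paper's: both use the hypothesis only to ensure a finite top degree, then reduce to $R$ via the long exact sequence of a free cover together with Proposition~\ref{1}(4). The only cosmetic difference is that the paper iterates the dimension shift along a full free resolution (shifting $\H_\Phi^i(M)$ up to degree $t=\max\{i,r+1\}$ where $\H_\Phi^{>r}(R)=0$), whereas you argue once at the top degree $e=\cd\Phi$ by contradiction; the ingredients and the role of the hypothesis are identical.
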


\begin{proof}
It suffices to prove that $\cd\Phi \le n$ if and only if $\H_\Phi^{>n}(R)=0$ for each $n\in\Z$.
We may assume $\H_\Phi^{>r}(R)=0$ for some integer $r$.
Indeed, if $\dim R=d < \infty$, then $\H_\Phi^{i}(R)=0$ for all $i > d$.
If $\Phi = \V(I)$ for some ideal $I$, then $\H_\Phi^{i}(R)=0$ for all $i > s$, where $I$ is generated by $s$ elements.

The ``only if'' part is evident.
To show the ``if'' part, assume $\H_\Phi^{>n}(R)=0$ and let $M$ be an $R$-module.
Take a free resolution $\cdots\xrightarrow{f_2}F_1\xrightarrow{f_1}F_0\xrightarrow{f_0}M\to0$ and set $M_i=\Im f_i$ for each $i$.
Using Proposition \ref{1}(4), we get $\H_\Phi^i(M) \cong \H_\Phi^{i+1}(M_1) \cong \cdots \cong \H_\Phi^{t}(M_{t-i}) =0$ for any $i > n$, where $t=\max\{i,r+1\}$.
Thus $\cd\Phi\le n$ as desired.
\end{proof}

For a subset $\Phi$ of $\Spec R$, put $\L_\Phi=\{X\in\D(\Mod R)\mid\supp X\subseteq\Phi\}$ and $\L_\Phi^\perp=\{X \in \D(\Mod R) \mid \Hom_{\D(\Mod R)}(\L_\Phi, X) = 0\}$.
Then by \cite[Theorem 2.8]{Nee} the subcategory $\L_\Phi$ is generated by a set.
It follows from \cite[Propositions 4.9.1, 4.11.2 and Theorem 5.6.1]{Kra10} and \cite[Lemma 4.5]{Nee11} that there exist exact functors
$$
\xymatrix{
\L_\Phi\ar@<2pt>[r]^-{i_\Phi} & \D(\Mod R)\ar@<2pt>[r]^-{\lambda_\Phi}\ar@<2pt>[l]^-{\gamma_\Phi} & \L_\Phi^\perp,\ar@<2pt>[l]^-{j_\Phi}
}
$$
where $i_\Phi$ and $j_\Phi$ are inclusion functors with $i_\Phi \dashv \gamma_\Phi$ and $\lambda_\Phi \dashv j_\Phi$, such that:
\begin{enumerate}[\rm(1)]
\item
There is a functorial exact triangle $\gamma_\Phi (X) \xrightarrow{\theta_\Phi(X)} X \xrightarrow{\eta_\Phi(X)} \lambda_\Phi (X) \to \gamma_\Phi (X)[1]$ for $X\in\D(\Mod R)$ which is isomorphic to any exact triangle $X' \to X \to X'' \to X'[1]$ with $X' \in \L_\Phi$ and $X'' \in \L_\Phi^\perp$.

\item
If $\Phi$ is specialization-closed, then $\gamma_\Phi \cong \R\Gamma_\Phi$ and $\L_\Phi^\perp = \L_{\Phi^\complement}$.
\end{enumerate}

For a complex homologically bounded below, the image by $\lambda_\Phi$ can be described by using its minimal injective resolution as follows.

\begin{prop}\label{lmd}
Let $\Phi$ be a specialization-closed subset of $\Spec R$.
Let $X \in \D(\Mod R)$ with $\H^{\ll0}(X)=0$.
Then there is an isomorphism $\lambda_\Phi(X)\cong\E(X)/\Gamma_\Phi(\E(X))$ in $\D(\Mod R)$.
\end{prop}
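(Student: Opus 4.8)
The plan is to identify a concrete exact triangle with the right ``support profile'' and then invoke the uniqueness statement in item (1) following the localization sequence. Write $I = \E(X)$ for the minimal injective resolution of $X$; this exists because $\H^{\ll 0}(X) = 0$. Since $\Gamma_\Phi$ is left exact and preserves injectives (any injective $R$-module is a direct sum of indecomposables $\e_R(R/\p)$, and $\Gamma_\Phi(\e_R(R/\p))$ equals $\e_R(R/\p)$ or $0$ according as $\p \in \Phi$ or not, by Proposition \ref{1}(1)), the submodule complex $\Gamma_\Phi(I) \subseteq I$ is again a complex of injective modules, and so is the quotient $I/\Gamma_\Phi(I)$. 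Thus the termwise short exact sequence $0 \to \Gamma_\Phi(I) \to I \to I/\Gamma_\Phi(I) \to 0$ of complexes of injectives gives rise to an exact triangle
$$
\Gamma_\Phi(I) \to I \to I/\Gamma_\Phi(I) \to \Gamma_\Phi(I)[1]
$$
in $\D(\Mod R)$. Since $I \cong X$ in $\D(\Mod R)$, this is an exact triangle $X' \to X \to X'' \to X'[1]$ with $X' = \Gamma_\Phi(I)$ and $X'' = I/\Gamma_\Phi(I)$.

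Next I would check the two membership conditions so that item (1) forces $X'' \cong \lambda_\Phi(X)$. First, $\Gamma_\Phi(I) \in \L_\Phi$: each term $\Gamma_\Phi(I^n)$ is a direct sum of copies of $\e_R(R/\p)$ with $\p \in \Phi$, so $\Ass \Gamma_\Phi(I^n) \subseteq \Phi$, and by Proposition \ref{sup0}(2) we get $\supp \Gamma_\Phi(I) \subseteq \bigcup_n \Ass \Gamma_\Phi(I^n) \subseteq \Phi$. Second, $I/\Gamma_\Phi(I) \in \L_\Phi^\perp = \L_{\Phi^\complement}$ (using item (2), valid since $\Phi$ is specialization-closed): each term $I^n/\Gamma_\Phi(I^n)$ is a direct sum of copies of $\e_R(R/\p)$ with $\p \notin \Phi$, hence $\Ass\bigl(I^n/\Gamma_\Phi(I^n)\bigr) \subseteq \Phi^\complement$, and again by Proposition \ref{sup0}(2) one gets $\supp\bigl(I/\Gamma_\Phi(I)\bigr) \subseteq \bigcup_n \Ass\bigl(I^n/\Gamma_\Phi(I^n)\bigr) \subseteq \Phi^\complement$. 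With both conditions in hand, the uniqueness clause in item (1) yields an isomorphism between this triangle and the functorial triangle $\gamma_\Phi(X) \to X \to \lambda_\Phi(X) \to \gamma_\Phi(X)[1]$, and in particular $\lambda_\Phi(X) \cong I/\Gamma_\Phi(I) = \E(X)/\Gamma_\Phi(\E(X))$.

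The one point that needs a little care — and which I expect to be the main obstacle — is verifying that the second support condition uses $\supp$ rather than $\Supp$: the subcategories $\L_\Phi$ and $\L_{\Phi^\complement}$ are defined via small support, and $\Phi^\complement$ is \emph{not} specialization-closed, so one cannot replace $\supp$ by $\Supp$ or by $\cl(\supp(-))$ in that step. This is exactly why the argument routes through Proposition \ref{sup0}(2): for a complex of injectives $J$ one has $\supp J = \bigcup_n \Ass J^n$ on the nose, so the computation of $\supp$ reduces to reading off the indecomposable summands appearing in the minimal injective resolution. (One should also note that $I/\Gamma_\Phi(I)$ need not be the \emph{minimal} injective resolution of $\lambda_\Phi(X)$, but that is irrelevant: the statement only claims an isomorphism in $\D(\Mod R)$.) Everything else is formal manipulation with the localization sequence and the elementary behavior of $\Gamma_\Phi$ on injective hulls already recorded in Proposition \ref{1}.
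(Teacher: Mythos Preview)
Your proof is correct and follows essentially the same route as the paper: decompose each $\E^n(X)$ into indecomposable injectives, observe that $\Gamma_\Phi$ picks out the summands with $\p\in\Phi$, apply Proposition~\ref{sup0}(2) to locate the small supports of the two pieces, and invoke the uniqueness of the localization triangle. One small inaccuracy in your closing commentary: for an arbitrary bounded-below complex of injectives $J$ one only has $\supp J\subseteq\bigcup_n\Ass J^n$, not equality (equality requires minimality, per Proposition~\ref{sup0}(2)); but since your actual argument uses only the inclusion, this does not affect the proof.
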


\begin{proof}
Fix an integer $n$.
Write $\E^n(X)= \bigoplus_{\p \in \Spec R} \e_R(R/\p)^{\oplus A_{n,\p}}$, where $A_{n,\p}$ is a set.
Then the lower left holds, which implies the lower right by Proposition \ref{sup0}(2).
$$
\begin{cases}
\Gamma_\Phi(\E^n(X))\textstyle= \bigoplus_{\p \in \Phi} \e_R(R/\p)^{\oplus A_{n,\p}},\\
\E^n(X)/ \Gamma_\Phi(\E^n(X))\textstyle= \bigoplus_{\p \in \Phi^\complement} \e_R(R/\p)^{\oplus A_{n,\p}}.
\end{cases}
\quad
\begin{cases}
\supp \Gamma_\Phi(\E(X))=\bigcup_{n\in\Z} \Ass \Gamma_\Phi(\E^n(X)) \subseteq \Phi,\\
\supp (\E(X)/\Gamma_\Phi(\E(X)))=\bigcup_{n\in\Z} \Ass (\E^n(X)/\Gamma_\Phi(\E^n(X))) \subseteq \Phi^\complement.
\end{cases}
$$
Thus, the natural exact triangle $\Gamma_\Phi(\E(X)) \to \E(X) \to \E(X)/\Gamma_\Phi(\E(X)) \to \Gamma_\Phi(\E(X))[1]$ is isomorphic to the exact triangle $\gamma_\Phi(X) \xrightarrow{\theta_\Phi(X)} X \xrightarrow{\eta_\Phi(X)} \lambda_\Phi (X) \to \gamma_\Phi (X)[1]$ in $\D(\Mod R)$, which shows the assertion of the proposition.
\end{proof}

Next we introduce the transform functor with respect to a specialization-closed subset, which is also a generalization of the transform functor of an ideal.

\begin{dfn}
Let $\Phi$ be a subset of $\Spec R$.
The {\it $\Phi$-transform functor} $\d_\Phi^n: \Mod R \to \Mod R$ is defined by $\d_\Phi^{n}(M)= \H^n(\lambda_\Phi (M))$ for $M\in\Mod R$.
Applying $\H^0$ to $\eta_\Phi(M): M \to \lambda_\Phi(M)$, we get a natural map $\zeta_\Phi(M): M \to \d_\Phi^0(M)$.
\end{dfn}

\begin{rem}\label{trans}
Let $\Phi$ be a specialization-closed subset of $\Spec R$.
\begin{enumerate}[\rm(1)]
\item
For each $M\in\Mod R$, there are equivalences
$$
M \in \supp_{\Mod R}^{-1}(\Phi^\complement)
\ \Leftrightarrow\ 
\R\Gamma_\Phi(M) \cong 0
\ \Leftrightarrow\ 
\eta_\Phi(M): M \xrightarrow{\cong} \lambda_\Phi(M)
\ \Leftrightarrow\ 
\zeta_\Phi(M): M \xrightarrow{\cong} \d^0_\Phi(M) \mbox{ and } \d_\Phi^{>0}(M) = 0.
$$
In fact, if $\R\Gamma_\Phi(M) \cong 0$, then $\gamma_\Phi(M)=0$ and $M\cong \lambda_{\Phi}(M) \cong \E(M)/\Gamma_\Phi(\E(M))$ by Proposition \ref{lmd}, which implies $M \in \supp_{\Mod R}^{-1}(\Phi^\complement)$.
The other implications follow from Propositions \ref{sup0}(2) and \ref{1}(1).
\item
Let $M \in \Mod R$.
The exact triangle $\R\Gamma_\Phi(M) \xrightarrow{\theta_\Phi(M)} M \xrightarrow{\eta_\Phi(M)} \lambda_\Phi (M) \to \gamma_\Phi(M) [1]$ yields:\\
(i) $\d_\Phi^{<0}(M) =0$,\quad
(ii) $0 \to \H_\Phi^0(M) \to M \xrightarrow{\zeta_\Phi(M)} \d_\Phi^{0}(M) \to \H^1_\Phi(M) \to 0$ is exact,\quad
(iii) $\d_\Phi^{i}(M) \cong \H^{i+1}_\Phi(M)$ for $i \ge 1$.

\noindent
In particular, for an injective $R$-module $I$, there is a natural isomorphism $\d_\Phi^0(I) \cong I/\Gamma_\Phi(I)$.
\item
Let $0 \to L \to M \to N \to 0$ be an exact sequence in $\Mod R$.
Then it induces an exact triangle $\lambda_\Phi(L) \to \lambda_\Phi(M) \to \lambda_\Phi(N) \to \lambda_\Phi(L)[1]$, which induces a long exact sequence $0 \to \d_\Phi^0(L) \to \d_\Phi^0(M) \to \d_\Phi^0(N) \to \d_\Phi^1(L) \to\cdots$.
This means that the functor $\d_\Phi^0$ is left exact and the sequence $(\d_\Phi^i)_{i \ge 0}$ is a {\it cohomological $\delta$-functor} in the sense of \cite[Chapter XX, \S7]{Lan}.
Thus we can consider the right derived functor $\R\d_\Phi^0: \D^+(\Mod R) \to \D^+(\Mod R)$ on the derived category of bounded below complexes, and we have an isomorphism $\d_\Phi^i \cong \H^i\R\d_\Phi^0$ for $i\in\Z$.
Actually, there are natural isomorphisms $\lambda_\Phi(X) \cong \E(X)/\Gamma_\Phi(\E(X)) \cong \d_\Phi^0(\E(X)) \cong \R\d_\Phi(X)$ for $X \in \D^+(\Mod R)$, where the first isomorphism follows from Proposition \ref{lmd}.
For an ideal $I$ of $R$, the $\V(I)$-transform functors are nothing but the $I$-transform functors: $\d_{\V(I)}^n \cong \d_I^n := \varinjlim_{i \in \NN} \Ext_R^n(I^i, -)$ for all $n\in\Z$.
This follows from the above argument and \cite[Exercise 2.2.2]{BS}: $\d_{\V(I)}^n(M) \cong \H^n\R\d_{\V(I)}^0(M) \cong \d_I^n(M)$ for all $n \in \Z$.
\end{enumerate}
\end{rem}

We state the relationship between $\Phi$-transform functors and local cohomology functors with respect to $\Phi$, which gives a generalization of \cite[Corollary 2.2.8]{BS}.

\begin{prop}\label{trs}
Let $\Phi$ be a specialization-closed subset of $\Spec R$.
Let $M$ be an $R$-module.
\begin{enumerate}[\rm(1)]
\item
One has $\d_\Phi^0(\H^0_\Phi(M)) = 0$.
\item
The natural map $\d_\Phi^0(M) \to \d_\Phi^0(M/\H^0_\Phi(M))$ is an isomorphism.
\item
The equality $\d_\Phi^0(\zeta_\Phi(M))=\zeta_\Phi(\d_\Phi^0(M))$ holds, which is an isomorphism $\d_\Phi^0(M) \to \d_\Phi^0(\d_\Phi^0(M))$.
\item
It holds that $\H^0_\Phi(\d_\Phi^0(M)) = 0 = \H^1_\Phi(\d_\Phi^0(M))$.
\item
The map $\H^n_\Phi(\zeta_\Phi(M)) : \H_\Phi^n(M) \to \H_\Phi^n(\d^0_\Phi(M))$ is an isomorphism for $n \ge 2$.
\end{enumerate}
\end{prop}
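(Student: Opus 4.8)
The plan is to push everything through two tools already in place: the minimal-injective-resolution description $\lambda_\Phi(M)\cong\E(M)/\Gamma_\Phi(\E(M))$ from Proposition \ref{lmd}, and the exact sequence $0\to\H^0_\Phi(M)\to M\xrightarrow{\zeta_\Phi(M)}\d^0_\Phi(M)\to\H^1_\Phi(M)\to0$ of Remark \ref{trans}(2)(ii) together with $\d^i_\Phi(M)\cong\H^{i+1}_\Phi(M)$ for $i\ge1$ and the long exact sequences for $\d^\bullet_\Phi$ and $\lambda_\Phi$ of Remark \ref{trans}(3). A fact I will use repeatedly is that a \emph{$\Phi$-torsion} module $T$ (i.e.\ $\Gamma_\Phi(T)=T$, equivalently $\Supp T\subseteq\Phi$) satisfies $\lambda_\Phi(T)\cong0$: by Proposition \ref{sup0}(2) one has $\Ass\E^n(T)\subseteq\supp T\subseteq\Supp T\subseteq\Phi$, so $\Gamma_\Phi(\E^n(T))=\E^n(T)$ for all $n$ and hence $\E(T)/\Gamma_\Phi(\E(T))=0$; in particular $\d^i_\Phi(T)=0$ for all $i$.

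Part (1) is then immediate: $\H^0_\Phi(M)=\Gamma_\Phi(M)$ is $\Phi$-torsion, so $\d^0_\Phi(\H^0_\Phi(M))=\H^0(\lambda_\Phi(\H^0_\Phi(M)))=0$. For (2), applying Remark \ref{trans}(3) to $0\to\H^0_\Phi(M)\to M\to M/\H^0_\Phi(M)\to0$ yields a triangle whose first term $\lambda_\Phi(\H^0_\Phi(M))$ vanishes, so $\lambda_\Phi(M)\xrightarrow{\sim}\lambda_\Phi(M/\H^0_\Phi(M))$, and taking $\H^0$ shows the natural map in (2) is an isomorphism. For (5), the four-term sequence lets me factor $\zeta_\Phi(M)$ as $M\twoheadrightarrow M/\H^0_\Phi(M)\hookrightarrow\d^0_\Phi(M)$ with cokernel $\H^1_\Phi(M)$; feeding the short exact sequences $0\to\H^0_\Phi(M)\to M\to M/\H^0_\Phi(M)\to0$ and $0\to M/\H^0_\Phi(M)\to\d^0_\Phi(M)\to\H^1_\Phi(M)\to0$ into the long exact sequence of $\H^\bullet_\Phi$, and using that $\H^{\ge1}_\Phi$ vanishes on the $\Phi$-torsion modules $\H^0_\Phi(M)$ and $\H^1_\Phi(M)$ (Proposition \ref{1}), the connecting maps give $\H^n_\Phi(M)\xrightarrow{\sim}\H^n_\Phi(M/\H^0_\Phi(M))\xrightarrow{\sim}\H^n_\Phi(\d^0_\Phi(M))$ for $n\ge2$, and by functoriality the composite is $\H^n_\Phi(\zeta_\Phi(M))$.

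For (4) I would use the resolution description: write $\lambda_\Phi(M)\cong J^\bullet:=\E(M)/\Gamma_\Phi(\E(M))$ by Proposition \ref{lmd}, so each $J^n$ is injective with $\Gamma_\Phi(J^n)=0$ (its indecomposable summands are $\e_R(R/\p)$ with $\p\notin\Phi$, killed by $\Gamma_\Phi$ by Proposition \ref{1}(1)(b)). Since $\d^0_\Phi(M)=\H^0(J^\bullet)=\Ker(J^0\to J^1)\subseteq J^0$, left-exactness of $\Gamma_\Phi$ gives $\H^0_\Phi(\d^0_\Phi(M))=\Gamma_\Phi(\d^0_\Phi(M))\subseteq\Gamma_\Phi(J^0)=0$; and from $0\to\d^0_\Phi(M)\to J^0\to\Im(J^0\to J^1)\to0$, with $\Im(J^0\to J^1)\subseteq J^1$ (so its $\Gamma_\Phi$ vanishes) and $\H^1_\Phi(J^0)=0$, the long exact sequence forces $\H^1_\Phi(\d^0_\Phi(M))=0$. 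Finally, for (3), set $D=\d^0_\Phi(M)$; by (4), $\H^0_\Phi(D)=\H^1_\Phi(D)=0$, so the four-term sequence of Remark \ref{trans}(2)(ii) for $D$ shows $\zeta_\Phi(D)\colon D\to\d^0_\Phi(D)$ is an isomorphism, and this map is $\zeta_\Phi(\d^0_\Phi(M))$. Naturality of $\zeta_\Phi$ along $\zeta_\Phi(M)\colon M\to D$ gives $\zeta_\Phi(D)\circ\zeta_\Phi(M)=\d^0_\Phi(\zeta_\Phi(M))\circ\zeta_\Phi(M)$, so $\zeta_\Phi(D)-\d^0_\Phi(\zeta_\Phi(M))$ vanishes on $\Im\zeta_\Phi(M)$ and hence factors through $D/\Im\zeta_\Phi(M)\cong\H^1_\Phi(M)$; since this is $\Phi$-torsion while $\Gamma_\Phi(\d^0_\Phi(D))=\H^0_\Phi(\d^0_\Phi(D))=0$ by (4) applied to $D$, and a map from a $\Phi$-torsion module has $\Phi$-torsion image which must lie in $\Gamma_\Phi(\d^0_\Phi(D))=0$, the difference is $0$, so $\d^0_\Phi(\zeta_\Phi(M))=\zeta_\Phi(D)$ is an isomorphism.

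I expect the main obstacle to be exactly this last identification in (3): the two maps $\d^0_\Phi(\zeta_\Phi(M))$ and $\zeta_\Phi(\d^0_\Phi(M))$ out of $\d^0_\Phi(M)$ are only visibly equal after precomposing with $\zeta_\Phi(M)$, and closing the gap forces one to establish (4) first (for $D$ as well as for $M$) so that $\d^0_\Phi(\d^0_\Phi(M))$ carries no $\Phi$-torsion and the potential discrepancy, which factors through the torsion module $\H^1_\Phi(M)$, is automatically zero. Everything else reduces to Propositions \ref{lmd} and \ref{1} and the structural facts recorded in Remark \ref{trans}.
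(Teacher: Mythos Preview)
Your proof is correct; the chief difference from the paper lies in the logical dependency between (3) and (4). The paper establishes (3) first, by showing that $\d_\Phi^0$ is the left adjoint (with unit $\zeta_\Phi$) of the inclusion of the reflective subcategory $\X=\{N\in\Mod R:\Ass\E^i(N)\subseteq\Phi^\complement\text{ for }i=0,1\}$; the equality $\d_\Phi^0(\zeta_\Phi(M))=\zeta_\Phi(\d_\Phi^0(M))$ and its being an isomorphism are then the triangle identities for this adjunction. With (3) in hand, the paper deduces (4) by applying the snake lemma to the diagram obtained from Remark~\ref{trans}(2)(ii) and the $\d_\Phi^\bullet$-long exact sequence. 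You invert this order: (4) is obtained directly from the injective-resolution description $\lambda_\Phi(M)\cong\E(M)/\Gamma_\Phi(\E(M))$ of Proposition~\ref{lmd} by observing that $\d_\Phi^0(M)\hookrightarrow J^0$ with $\Gamma_\Phi(J^0)=0$, and then (3) follows from (4) via the naturality square of $\zeta_\Phi$ together with the fact that any map from the $\Phi$-torsion module $\H^1_\Phi(M)$ into $\d_\Phi^0(\d_\Phi^0(M))$ lands in $\Gamma_\Phi(\d_\Phi^0(\d_\Phi^0(M)))=0$. Your route is more self-contained and sidesteps the adjunction machinery; the paper's route makes (3) conceptually transparent (it is a formal consequence of reflectivity) at the cost of a more involved argument for (4). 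For (1), (2) and (5) the two arguments are essentially the same.
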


\begin{proof}
(1) The assertion can be deduced from Proposition \ref{1}(2) and Remark \ref{trans}(2)(ii).

(2) Apply the functor $\d_\Phi^0$ to the short exact sequence $0 \to \H^0_\Phi(M) \to M \to M/\H^0_\Phi(M) \to 0$, and use (1), Remark \ref{trans}(2)(iii) and Proposition \ref{1}(2).

(3) By Proposition \ref{lmd}, for any $M \in \Mod R$ there is a commutative diagram with exact rows:
$$
\xymatrix{
0\ar[r] & M  \ar[r]\ar[d]_{\zeta_\Phi(M)} & \E^0(M)\ar[r]\ar[d] & \E^1(M)\ar[d] \\
0\ar[r] & \d_\Phi^0(M)\ar[r] & \E^0(M)/\Gamma_\Phi(\E^0(M))\ar[r] & \E^1(M)/\Gamma_\Phi(\E^1(M))
}
$$
Let $\X$ be the subcategory of $\Mod R$ consisting of $R$-modules $M$ with $\Ass \E^i(M) \subseteq \Phi^\complement$ for $i=0,1$.
We establish a claim.

\begin{claim*}
The subcategory $\X$ of $\Mod R$ is reflective, that is, the inclusion functor $r:\X \hookrightarrow \Mod R$ admits a left adjoint $l$, which is $\d_{\Phi}^0: \Mod R \to \X$.
Furthermore, the unit $u$ of this adjunction is $\zeta_{\Phi}$.
\end{claim*}

\begin{proof}[Proof of Claim]
Take an $R$-module $N\in\X$.
There is a natural homomorphism $\Theta:\Hom_R(\d_{\Phi}^0(M), N) \to \Hom_R(M, N)$ given by $f \mapsto f \circ \zeta_{\Phi}(M)$.
It suffices to verify that $\Theta$ is an isomorphism.
Take $g \in \Hom_R(M, N)$ and let $g^i\in\Hom_R(\E^i(M),\E^i(N))$ be an extension of $g$ for each $i$.
Fix $i\in\{0,1\}$.
As $N\in\X$, we have $\Gamma_\Phi(\E^i(N))=0$ and $\Gamma_\Phi(g^i)=0$.
Thus $g^i$ factors through $\E^i(M)/\Gamma_\Phi(\E^i(M))$, and hence $g$ factors through $\d_\Phi^0(M)$.
This shows the surjectivity of $\Theta$.
Next, take $f \in \Hom_R(\d_{\Phi}^0(M), N)$ with $f \circ\zeta_\Phi(M) = 0$.
Then the composition $p:\E^0(M) \to \E^0(M)/\Gamma_{\Phi}(\E^0(M)) \xrightarrow{f^0} \E^0(N)$ factors the $0$th differential $d^0:\E^0(M) \to \E^1(M)$, that is, there is a map $s:\E^1(M)\to\E^0(N)$ with $p=sd$.
Similarly as above, $\Gamma_\Phi(s)=0$ and $s$ factors through $\E^1(M)/\Gamma_{\Phi}(\E^1(M))$.
Therefore $f^0$ factors through the $0$th differential of $\E(M)/\Gamma_{\Phi}(\E(M))$.
This shows $f=0$.
\renewcommand{\qedsymbol}{$\square$}
\end{proof}

Let $c$ be the counit of the adjunction.
Then the counit-unit equations are $1_l=cl\circ lu$ and $1_r=rc\circ ur$.
Hence $1_{rl}=rcl\circ rlu$ and $1_{rl}=rcl\circ url$.
As the right adjoint $r$ is the inclusion functor, which is fully faithful.
Hence the counit $c$ is an isomorphism, and so is $rcl$.
Therefore the equality $rlu=url$ holds and it is an isomorphism.
This means that the equality $\zeta_\Phi\cdot\d_\Phi^0=\d_\Phi^0\cdot\zeta_\Phi$ holds and it is an isomorphism.

(4) Thanks to (2), we may assume $\H_\Phi^{0}(M) = 0$.
Applying the snake lemma to the commutative diagram
$$
\xymatrix@C4pc{
0 \ar[r] & \d_\Phi^{0} (M) \ar[r]^-{\d_\Phi^0(\zeta_\Phi(M))} & \d_\Phi^0(\d_\Phi^{0}(M)) \ar[r] & \d_\Phi^{0} (\H^1_\Phi(M)) \ar[r] & \d_\Phi^{1} (M) \\
0 \ar[r] & M \ar[r]^-{\zeta_\Phi(M)} \ar[u]_{\zeta_\Phi(M)} & \d_\Phi^0(M) \ar[r]  \ar[u]_{\zeta_\Phi(\d_\Phi^0(M))} & \H_\Phi^1(M) \ar[r] \ar[u]_{\zeta_\Phi(\H_\Phi^1(M))} & 0
}
$$
induced by (2)(ii) and (3) of Remark \ref{trans}, we have an exact sequence
$$
\H^0_\Phi(M) = 0 \to \H^0_\Phi(\d^0_\Phi(M)) \to \H^0_\Phi(\H^1_\Phi(M))=\H^1_\Phi(M) \xrightarrow{\delta} \H^1_\Phi(M) \to \H^1_\Phi(\d^0_\Phi(M)) \to \H^1_\Phi(\H^1_\Phi(M))=0,
$$
where the second and third equalities follow from Proposition \ref{1}(2).
It is seen from (3) that the map $\delta$ is the identity map.
The above exact sequence now tells us that $\H^0_\Phi(\d^0_\Phi(M)) = 0=\H^1_\Phi(\d^0_\Phi(M))$.

(5) As $\H_\Phi^{>0}(\H_\Phi^0(M))=0$ by Proposition \ref{1}(2), the natural map $\H_\Phi^n(M)\to\H_\Phi^n(M/\H_\Phi^0(M))$ is an isomorphism.
Thus we may assume $\H_\Phi^0(M) = 0$.
Since $\H_\Phi^{>0}(\H_\Phi^1(M)) = 0$ by Proposition \ref{1}(2) again, the long exact sequence induced from $0 \to M \to \d_\Phi^0(M) \to \H_\Phi^1(M) \to 0$ completes the proof.
\end{proof}

\section{$n$-wide subcategories}\label{n}

We start by giving the definition of an $n$-wide subcategory of $\Mod R$.

\begin{dfn}
Let $n\ge0$ be an integer.
A subcategory $\X$ of $\Mod R$ is said to be closed under {\em $n$-kernels} (resp. {\em $n$-cokernels}) if for every exact sequence $0 \to M \to X^0 \to \cdots\to X^n\to N\to0$ in $\Mod R$ with $X^i \in \X$ for all $i$ the module $M$ (resp. $N$) is in $\X$.
We say that $\X$ is {\it $n$-wide} if it is closed under  extensions, $n$-kernels and $n$-cokernels.
\end{dfn}

\begin{rem}\label{rem2}
\begin{enumerate}[(1)]
\item
A subcategory $\X$ of $\Mod R$ is $n$-wide if and only if for an exact sequence
$$
M_n\to\cdots\to M_0\to M\to M^0\to\cdots\to M^n
$$
in $\Mod R$ with $M_i,M^i\in\X$ for all $i$ one has $M\in\X$.
\item
If a subcategory of $\Mod R$ is closed under $n$-kernels (resp. $n$-cokernels), then it is closed under $(n+1)$-kernels (resp. $(n+1)$-cokernels).
In particular, being $n$-wide implies being $(n+1)$-wide.
\item
Let $\X_1,\dots,\X_r$ be subcategories of $\Mod R$.
If $\X_i$ is closed under $n_i$-kernels (resp. $n_i$-cokernels) for all $i$, then $\X_1\cap\cdots\cap\X_r$ is closed under $\max\{n_1,\dots,n_r\}$-kernels (resp. $\max\{n_1,\dots,n_r\}$-cokernels).
In particular, if $\X_i$ is $n_i$-wide for all $i$, then $\X_1\cap\cdots\cap\X_r$ is $\max\{n_1,\dots,n_r\}$-wide.
\item
A subcategory of $\Mod R$ is closed under $0$-kernels (resp. $0$-cokernels) if and only if it is closed under submodules (resp. quotient modules).
In particular, being $0$-wide and closed under direct sums is equivalent to being localizing.
\item
A subcategory of $\Mod R$ is closed under $1$-kernels (resp. $1$-cokernels) if and only if it is closed under kernels (resp. cokernels).
In particular, being $1$-wide is equivalent to being wide.
\end{enumerate}
\end{rem}

We give a necessary condition for a specialization-closed subset to have cohomological dimension at most $n$ in terms of an $n$-wide subcategory.

\begin{thm}\label{nser}
Let $\Phi\subseteq\Spec R$ be specialization-closed, and $n\ge0$ an integer.
If $\cd\Phi \le n$, then $\supp_{\Mod R}^{-1}(\Phi^\complement)$ is $n$-wide.
\end{thm}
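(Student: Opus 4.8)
The plan is to use the cohomological description of the subcategory $\X:=\supp_{\Mod R}^{-1}(\Phi^\complement)$ given in Remark \ref{trans}(1): a module $M$ belongs to $\X$ if and only if $\R\Gamma_\Phi(M)\cong0$, equivalently $\H_\Phi^i(M)=0$ for all $i\ge0$. Since $\X$ is already closed under extensions by Corollary \ref{sup1}(2), it suffices to show that $\X$ is closed under $n$-kernels and $n$-cokernels, so that the conclusion follows from the definition of an $n$-wide subcategory.

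Accordingly, fix an exact sequence $0\to M\to X^0\to X^1\to\cdots\to X^n\to N\to0$ in $\Mod R$ with $X^i\in\X$ for every $i$. The first step is to split it, in the usual way, into short exact sequences $0\to N^{(j-1)}\to X^j\to N^{(j)}\to0$ for $0\le j\le n$, where $N^{(-1)}:=M$, $N^{(j)}:=\Coker(N^{(j-1)}\to X^j)\cong\Im(X^j\to X^{j+1})$ for $0\le j\le n-1$, and $N^{(n)}\cong N$. For each of these, the hypothesis $\H_\Phi^i(X^j)=0$ for all $i$ makes the long exact sequence of local cohomology attached to $\Gamma_\Phi$ collapse, giving $\Gamma_\Phi(N^{(j-1)})=0$ together with natural isomorphisms $\H_\Phi^i(N^{(j)})\cong\H_\Phi^{i+1}(N^{(j-1)})$ for all $i\ge0$.

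Iterating these isomorphisms yields $\H_\Phi^i(N^{(k)})\cong\H_\Phi^{i+k+1}(M)$ for all $i\ge0$ and $0\le k\le n$. Combining the equalities $\Gamma_\Phi(N^{(j)})=0$ for $0\le j\le n-1$ with $\H_\Phi^0(N^{(j)})\cong\H_\Phi^{j+1}(M)$ gives $\H_\Phi^1(M)=\cdots=\H_\Phi^n(M)=0$; adjoining $\H_\Phi^0(M)=\Gamma_\Phi(M)=0$ (the case $j=0$) and the hypothesis $\cd\Phi\le n$, which forces $\H_\Phi^{>n}(M)=0$, we conclude that $\H_\Phi^i(M)=0$ for every $i\ge0$, hence $M\in\X$. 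For the cokernel, taking $k=n$ above gives $\H_\Phi^i(N)\cong\H_\Phi^{i+n+1}(M)=0$ for all $i\ge0$, since $i+n+1>n$; hence $N\in\X$. This proves that $\X$ is closed under $n$-kernels and $n$-cokernels, and, together with Corollary \ref{sup1}(2), that $\X$ is $n$-wide.

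The argument is essentially a dimension-shifting bookkeeping exercise, so I do not expect a genuine obstacle. The points that require attention are the reduction of the problem to closure under $n$-kernels and $n$-cokernels (extensions being free from Corollary \ref{sup1}(2)), and the verification that the degrees in which the local cohomology of $M$ is shown to vanish, taken together with the bound supplied by $\cd\Phi\le n$, really exhaust all cohomological degrees.
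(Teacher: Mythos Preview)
Your proof is correct and follows essentially the same approach as the paper: both arguments split the given exact sequence into short exact sequences, use Remark~\ref{trans}(1) to identify $\supp^{-1}(\Phi^\complement)$ with the modules having vanishing $\H_\Phi^{\ge0}$, and then perform dimension shifting on the resulting long exact local-cohomology sequences together with the bound $\cd\Phi\le n$. Your version is in fact slightly more explicit than the paper's in that you state that closure under extensions is supplied by Corollary~\ref{sup1}(2), a point the paper leaves implicit.
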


\begin{proof}
Consider an exact sequence $0 \to M\xrightarrow{f_{n+1}} X_{n} \xrightarrow{f_{n}}\cdots \xrightarrow{f_1} X_0 \xrightarrow{f_0} N \to 0$ with $X_i \in \supp_{\Mod R}^{-1}(\Phi^\complement)$ for all $i$.
Then $\H_\Phi^{\ge0}(X_i)=0$ by Remark \ref{trans}(1).
Set $U_k= \Im f_k$ for $0\le k \le n+1$.
The exact sequence $0 \to U_{k+1} \to X_k \to U_k \to 0$ tells $\H_\Phi^0(U_{k+1})=0$ and $\H_\Phi^i(U_k)\cong \H_\Phi^{i+1}(U_{k+1})$ for $i \ge 0$ and $0 \le k \le n$.
Thus $\H_{\Phi}^i(N) \cong \H_{\Phi}^{i+1}(U_1) \cong \cdots \cong \H_{\Phi}^{i+n+1}(U_{n+1}) =0$ for $i \ge 0$.
Also, $\H_{\Phi}^i(M) \cong \H_{\Phi}^{i-1}(U_n) \cong \cdots \cong \H_{\Phi}^{0}(U_{n-i+1}) =0$ for $1\le i \le n$ and $\H_\Phi^0(M)=\H_\Phi^0(U_{n+1})=0$.
It follows that $\H_\Phi^{\ge0}(M)=\H_\Phi^{\ge0}(N)=0$, and Remark \ref{trans}(1) implies that $M, N$ belong to $\supp_{\Mod R}^{-1}(\Phi^\complement)$.
\end{proof}

It is natural to ask whether the converse of the implication in Theorem \ref{nser} holds.

\begin{ques}\label{q}
Let $\Phi$ be a specialization-closed subset of $\Spec R$, and let $n\ge0$ be an integer.
Suppose that $\supp_{\Mod R}^{-1}(\Phi^\complement)$ is $n$-wide.
Then, does it hold that $\cd\Phi\le n$?
\end{ques}

We shall prove that this question is affirmative in several cases.
For it, we need some preparation.

\begin{prop}\label{6}
Let $\Phi$ be a specialization-closed subset of $\Spec R$.
Let $M$ be an $R$-module.
Let $n\ge0$ be an integer.
Then the following are equivalent.
\begin{enumerate}[\rm(1)]
\item
One has $\H_\Phi^i(M) = 0$ for all $0 \le i \le n$.
\item
The inclusion $\Ass \E^i(M) \subseteq \Phi^\complement$ holds for all $0 \le i \le n$.
\item
There exists an injective resolution $I$ of $M$ such that $\Ass I^i \subseteq \Phi^\complement$ for all $0 \le i \le n$.

\end{enumerate}
\end{prop}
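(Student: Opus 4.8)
The plan is to prove the cycle of implications $(1)\Rightarrow(2)\Rightarrow(3)\Rightarrow(1)$, with the bulk of the work in $(1)\Rightarrow(2)$ and $(3)\Rightarrow(1)$, since $(2)\Rightarrow(3)$ is immediate by taking $I=\E(M)$ to be the minimal injective resolution (whose terms satisfy $\Ass\E^i(M)\subseteq\Phi^\complement$ by hypothesis, combined with Proposition \ref{sup0}(2)).

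For $(1)\Rightarrow(2)$, I would argue by induction on $n$, cutting up the minimal injective resolution into short exact sequences of syzygies $0\to Z^i\to\E^i(M)\to Z^{i+1}\to0$ where $Z^0=M$. The base case $n=0$: from $\H^0_\Phi(M)=0$ we get $\Gamma_\Phi(M)=0$, hence $\Ass M\subseteq\Phi^\complement$ by Proposition \ref{1}(1b); but $M\hookrightarrow\E^0(M)$ and $\Ass\E^0(M)=\supp\E^0(M)$, and one checks via the decomposition $\E^0(M)\cong\bigoplus_\p\e_R(R/\p)^{\oplus\mu_0(\p,M)}$ that $\Ass\E^0(M)=\Ass M$ (indecomposable injective summands of the injective hull contribute only associated primes of $M$ itself, by minimality). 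So $\Ass\E^0(M)\subseteq\Phi^\complement$. For the inductive step, the short exact sequence $0\to M\to\E^0(M)\to Z^1\to0$ together with $\Gamma_\Phi(\E^0(M))=0$ (since $\Ass\E^0(M)\subseteq\Phi^\complement$, using Proposition \ref{1}(1)) gives $\H^0_\Phi(Z^1)\cong\H^1_\Phi(M)=0$ and $\H^i_\Phi(Z^1)\cong\H^{i+1}_\Phi(M)=0$ for $1\le i\le n-1$; moreover $\E(Z^1)$ is $\E^{\bullet\ge1}(M)$ shifted, so the inductive hypothesis applied to $Z^1$ with the integer $n-1$ yields $\Ass\E^i(Z^1)=\Ass\E^{i+1}(M)\subseteq\Phi^\complement$ for $0\le i\le n-1$, i.e. $\Ass\E^j(M)\subseteq\Phi^\complement$ for $1\le j\le n$, completing the induction.

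For $(3)\Rightarrow(1)$, suppose $I$ is an injective resolution of $M$ with $\Ass I^i\subseteq\Phi^\complement$ for $0\le i\le n$. By Proposition \ref{sup0}(2) (the inclusion $\Ass M'\subseteq\supp M'$, with equality for injectives, or directly $\Ass I^i\subseteq\Phi^\complement$), we have $\Gamma_\Phi(I^i)=0$ for $0\le i\le n$ by Proposition \ref{1}(1b). Since $\H^i_\Phi(M)$ is computed as the $i$th cohomology of the complex $\Gamma_\Phi(I)$ (any injective resolution may be used to compute the right derived functor), the fact that $\Gamma_\Phi(I^i)=0$ for $0\le i\le n$ forces $\H^i_\Phi(M)=\H^i(\Gamma_\Phi(I))=0$ for $0\le i\le n$.

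The step I expect to require the most care is the identification $\Ass\E^0(M)=\Ass M$ (and more generally that minimality of the resolution is what makes the argument run): one must invoke that $\E^0(M)$ is the injective hull of $M$, so every indecomposable summand $\e_R(R/\p)$ satisfies $\p\in\Ass M$, which is exactly the content of $\mu_0(\p,M)\ne0\Rightarrow\p\in\Ass M$ recorded in the excerpt. An alternative, perhaps cleaner, route for $(1)\Rightarrow(2)$ avoids this subtlety: work with $\lambda_\Phi$ and Proposition \ref{lmd}, noting $\H^i_\Phi(M)=0$ for $i\le n$ means the truncation-level interaction of $\Gamma_\Phi(\E(M))$ with $\E(M)$ is trivial in low degrees; but the syzygy induction above is the most elementary and self-contained, so that is the one I would write out.
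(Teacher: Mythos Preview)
Your proof is correct and follows the same inductive cosyzygy strategy as the paper, but the two arguments differ in how they handle the implication out of (1). The paper proves $(1)\Rightarrow(3)$ rather than $(1)\Rightarrow(2)$: in the base case it embeds $M$ into $\E^0(M)/\Gamma_\Phi(\E^0(M))$ via the transform $\d_\Phi^0(M)$ (using Proposition~\ref{lmd} and Remark~\ref{trans}(2)), thereby producing the first term of \emph{some} injective resolution with associated primes in $\Phi^\complement$, and then splices on the resolution of the cokernel. You instead stay with the minimal resolution throughout, using the elementary identification $\Ass\E^0(M)=\Ass M$ together with the fact that $\E^{\bullet}(Z^1)=\E^{\bullet+1}(M)$.

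Your route is more elementary in that it avoids the transform machinery entirely; the paper's route avoids needing to invoke that truncations of minimal injective resolutions are again minimal (a standard fact, but one your argument leans on). Your $(3)\Rightarrow(1)$ is identical to the paper's. The step you flagged as delicate, $\Ass\E^0(M)=\Ass M$, is indeed the right thing to justify and follows exactly as you say from $\mu_0(\p,M)=\dim_{\kappa(\p)}\Hom_{R_\p}(\kappa(\p),M_\p)$, which is nonzero precisely when $\p\in\Ass M$.
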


\begin{proof}
The equivalence (2) $\Leftrightarrow$ (3) is obvious.
If (3) holds, then $\Gamma_\Phi(I^i)=0$ for all $0\le i\le n$ by Proposition \ref{1}(2), and hence $\H_\Phi^i(M)=\H^i(\Gamma_\Phi(I))=0$ for all $0\le i\le n$.
Thus (3) implies (1).

We prove the implication (1) $\Rightarrow$ (3) by induction on $n$.
When $n=0$, there are inclusions 	
$$
M \hookrightarrow \mathrm{D}_\Phi^0(M) = \mathrm{H}^0(\E(M)/ \Gamma_\Phi(\E(M))) \hookrightarrow \E^0(M)/ \Gamma_\Phi(\E^0(M)) \in \Ass^{-1}_{\Mod R} (\Phi^\complement)
$$
by Remark \ref{trans}(2) and Proposition \ref{lmd}.
Let $n\ge1$.
Then, as $\H_\Phi^0(M)=0$, the induction basis yields an exact sequence $0\to M\to I^0\to N\to0$ with $I^0\in\Ass_{\Inj R}^{-1}(\Phi^\complement)$.
The long exact sequence shows $\H_\Phi^i(N)=0$ for all $0\le i\le n-1$.
Applying the induction hypothesis to $N$, we get an injective resolution $0\to N\to I^1\to I^2\to\cdots$ with $\Ass I^i\subseteq\Phi^\complement$ for all $1\le i\le n$.
Splicing the above two exact sequences, we obtain a desired injective resolution $0\to M\to I^0\to I^1\to\cdots$.
\end{proof}

We recall the definition of a balanced big Cohen--Macaulay module over a local ring, whose existence in full generality has recently been proved by Andr\'e \cite{And}.

\begin{dfn}
Let $R$ be a local ring with maximal ideal $\m$.
An $R$-module $B$ is called a {\em balanced big Cohen--Macaulay $R$-module} if $\m B\ne B$ and every system of parameters of $R$ is a $B$-regular sequence.
\end{dfn}

\begin{rem}\label{108}
Let $R$ be a local ring.
Let $I$ be an ideal of $R$ with $\dim R/I=\dim R$.
Then every balanced big Cohen--Macaulay $R/I$-module is a balanced big Cohen--Macaulay $R$-module.
This is straightforward from the definition.
\end{rem}

We give a necessary condition for $\supp^{-1}(\Phi^\complement)$ to be $n$-wide, whose proof uses a balanced big Cohen--Macaulay module.

\begin{prop}\label{111}
Let $n\ge0$ be an integer.
Let $\Phi$ be a specialization-closed subset of $\Spec R$.
Suppose that $\supp^{-1}_{\Mod R}(\Phi^\complement)$ is $n$-wide.
Then $\H_\Phi^i(R)_\p=0$ for all integers $i>n$ and all prime ideals $\p$ of $R$ with $\height\p\le i$.
\end{prop}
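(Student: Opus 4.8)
The plan is to reduce to a local statement and then run a dimension induction using a balanced big Cohen--Macaulay module to produce a module living in $\supp^{-1}_{\Mod R}(\Phi^\complement)$ whose vanishing of local cohomology is governed by the $n$-wideness hypothesis. First, I would localize: by Proposition~\ref{1}(5) we have $\H_\Phi^i(R)_\p\cong\H_{\Phi_\p}^i(R_\p)$, and it is straightforward to check that $\supp^{-1}_{\Mod R_\p}(\Phi_\p^\complement)$ inherits the $n$-wide property from $\supp^{-1}_{\Mod R}(\Phi^\complement)$ (given an exact sequence of $R_\p$-modules realizing a potential $n$-kernel or $n$-cokernel, view it over $R$ via restriction of scalars, or alternatively use that localization is exact and preserves small supports suitably). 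Hence we may assume $(R,\m)$ is local and it suffices to show $\H_\Phi^i(R)=0$ for $i>n$ when $\height\m\le i$, i.e.\ when $\dim R\le i$.

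Next I would set $d=\dim R$ and argue by induction on $d$, with the base case $d\le n$ handled by Grothendieck vanishing $\cd\Phi\le\dim R\le n<i$ (Remark~\ref{rem-}(3)). For the inductive step, assume $d>n$ and fix $i$ with $n<i\le d$; we want $\H_\Phi^d(R)=0$ in the critical case $i=d$ (the cases $n<i<d$ follow by applying the induction hypothesis to a quotient of $R$ of dimension $i$, or can be absorbed into the same scheme). Choose, by Andr\'e's theorem, a balanced big Cohen--Macaulay $R$-module $B$. The key point is that $B$ can be built into an exact sequence of length $n+1$ whose outer terms are forced to lie in $\supp^{-1}_{\Mod R}(\Phi^\complement)$: concretely, take the minimal injective resolution of $R$ (or of $B$) and use the fact that for a big Cohen--Macaulay module the Bass numbers $\mu_i(\p,B)$ vanish for $\height\p<\depth_{\p}B$, so that the first $n+1$ injective terms $\E^0(B),\dots,\E^n(B)$ involve only $\e_R(R/\p)$ with $\height\p>n$, and by the assumption on $\Phi$ (via the already-established localizations $\H^i_\Phi(R)_\p=0$ for $\height\p\le i$ at smaller-dimensional rings, combined with Proposition~\ref{6}) these primes lie in $\Phi^\complement$. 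Then the truncation $0\to B\to\E^0(B)\to\cdots\to\E^n(B)\to C\to 0$ has $\E^j(B)\in\supp^{-1}_{\Mod R}(\Phi^\complement)$ by Proposition~\ref{sup0}(2), and $n$-wideness forces both $B$ and $C$ into $\supp^{-1}_{\Mod R}(\Phi^\complement)$; by Remark~\ref{trans}(1) this gives $\R\Gamma_\Phi(B)\cong 0$, hence $\H_\Phi^{>0}(B)=0$, in particular $\H_\Phi^d(B)=0$.

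Finally I would transfer this vanishing from $B$ back to $R$. Since $B$ is a balanced big Cohen--Macaulay module over the $d$-dimensional local ring $R$, a system of parameters $\xx=x_1,\dots,x_d$ is a $B$-regular sequence, so $\H^d_{\m}(B)=\H^d_{(\xx)}(B)\ne 0$; more relevantly, $B$ has depth $d$ along $\m$ and one can compare $\H^d_\Phi(-)$ of $R$ and $B$ using a prime filtration of $B$ (each factor $R/\p$ with $\p\in\Supp B$, and $\H^d_\Phi(R/\p)$ relates to $\H^d_\Phi(R)$ when $\p$ is a minimal prime, i.e.\ $\dim R/\p=d$, since then $R/\p$ and $R$ have the same dimension and $\H^d_\Phi$ is right-exact in degree $d$). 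The cleanest route: if $\H^d_\Phi(R)\ne 0$ then since $\H^d_\Phi$ is a right-exact functor on $\Mod R$ (being the top local cohomology, as $\cd\Phi\le d$), there is a surjection onto $\H^d_\Phi(R/\p)$ for some minimal prime $\p$; replacing $R$ by $R/\p$ (which has dimension $d$ and over which $B$ is still balanced big Cohen--Macaulay by Remark~\ref{108}) and $B$ by a module with a nonzero map to it, one derives $\H^d_\Phi(B)\ne 0$, a contradiction.

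The main obstacle I anticipate is precisely this last transfer step — making rigorous that the vanishing $\H^d_\Phi(B)=0$ for the big Cohen--Macaulay module $B$ actually forces $\H^d_\Phi(R)=0$. One must exploit both that $d=\dim R$ is the top degree (so $\H^d_\Phi$ is right exact, giving surjections onto residue-field-type quotients and reductions to domains) and the defining regularity of $B$ (so that $B$ genuinely "sees" the top-dimensional part of $\Spec R$, i.e.\ $\Supp B$ contains a minimal prime of maximal coanomial dimension); Remark~\ref{108} is what licenses the reduction to a domain. Threading these together correctly, rather than the construction of the length-$(n+1)$ resolution, is where the argument needs care.
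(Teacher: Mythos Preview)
Your argument has a genuine gap at the resolution step. The claim that the first $n+1$ terms $\E^0(B),\dots,\E^n(B)$ of the minimal injective resolution involve only primes of height $>n$ is false: already $\Ass\E^0(B)=\Ass B$ consists of minimal primes (height $0$), since any associated prime of a balanced big Cohen--Macaulay module over $(R,\m)$ must avoid every parameter element. Even granting the best possible Bass-number behaviour $\mu_i(\q,B)=0$ for $i<\height\q$ (which is not automatic for big modules at non-maximal primes), the primes appearing in $\E^i(B)$ for $i\le n$ would have height $\le n$, not $>n$, and nothing in your induction forces such primes to lie in $\Phi^\complement$: the inductive statement $\H^j_\Phi(R)_\q=0$ for $j>n$ is vacuous when $\height\q\le n$, by Grothendieck vanishing, so it cannot exclude $\q\in\Phi$.

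The paper supplies exactly the device you are missing. After reducing to $\height\p=n+1$ and passing to the completion $S$ of $R_\p$, it invokes a prime-avoidance result of Hatamkhani--Divaani-Aazar to find a prime $P$ of $S$ with $\dim S/P=n+1$ such that $\P S+P$ is $\p S$-primary for every $\P\in\Phi_\p$; the big Cohen--Macaulay module $B$ is then taken over $S/P$. The role of $P$ is precisely to force $\Supp_S B\subseteq\V(P)$ to meet the pullback $\Psi$ of $\Phi$ only at the closed point $\p S$, while Sharp's theorem kills $\mu_i(\p S,B)$ for $i<n+1$; together these give $\E^i_S(B)\in\supp^{-1}(\Phi^\complement)$ (as $R$-modules) for all $i\le n$. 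The contradiction is then immediate and bypasses your ``transfer from $B$ to $R$'' step entirely: $n$-wideness, applied over $R$ itself (never over $R_\p$ or $S$), puts the $n$-cokernel $C$ in $\supp^{-1}(\Phi^\complement)$, yet $\mu_{n+1}(\p S,B)>0$ yields $\p\in\Ass_R C\subseteq\Phi^\complement$, whence $\Phi_\p=\emptyset$, contradicting $\H^{n+1}_{\Phi_\p}(R_\p)\ne0$. Your proposed transfer via right-exactness and prime filtrations would in any case be problematic, since $B$ need not be finitely generated.
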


\begin{proof}
In view of Remarks \ref{rem-}(3), \ref{rem2}(2) and Proposition \ref{1}(5), it suffices to prove that $\H_{\Phi_\p}^{n+1}(R_\p)=0$ for every prime ideal $\p$ with $\height\p=n+1$.
Assume contrarily that $\H_{\Phi_\p}^{n+1}(R_\p)\ne0$ for some such prime ideal $\p$.
Let $S$ be the completion of the local ring $R_\p$.
Then $\dim S=\dim R_\p=n+1$.
Applying \cite[Theorem 2.8]{HD}, we find a prime ideal $P$ of $S$ with $\dim S/P=n+1$ such that $\P S+P$ is $\p S$-primary for all $\P\in\Phi_\p$.
It follows from \cite{And} that there exists a balanced big Cohen--Macaulay $S/P$-module $B$.
Then $B$ is also a balanced big Cohen--Macaulay $S$-module by Remark \ref{108}.
We have $\mu_i(\p S,B)=0$ for all $i<n+1$ and $\mu_{n+1}(\p S,B)>0$ by \cite[Theorem 3.2]{Sha}.
Using Proposition \ref{sup0}(1)(2), we observe that
$$
\Ass_S(\E_S^i(B))\subseteq\supp_S(B)\setminus\{\p S\}\subseteq\Supp_S(B)\setminus\{\p S\}\subseteq\V(\ann_SB)\setminus\{\p S\}\subseteq\V(P)\setminus\{\p S\}\subseteq\Psi^\complement
$$
for all $i<n+1$, where $\Psi:=\{Q\in\Spec S\mid Q\cap R_\p\in\Phi_\p\}=\{Q\in\Spec S\mid Q\cap R\in\Phi\}$ .
Note that $\E_S^i(B)$ is also injective as an $R$-module.
Proposition \ref{1}(1b) shows $\Gamma_\Psi(\E_S^i(B))=0$, from which we easily see that $\Gamma_\Phi(\E_S^i(B))=0$.
Propositions \ref{sup0}(2) and \ref{1}(1b) imply $\supp_R(\E_S^i(B))=\Ass_R(\E_S^i(B))\subseteq\Phi^\complement$, whence $\E_S^i(B)\in\X:=\supp^{-1}(\Phi^\complement)$ for all $i<n+1$.

Now suppose that $\X$ is $n$-wide.
Then the image $C$ of the $n$th differential map in $\E_S(B)$ belongs to $\X$.
We have $\e_S(C)=\E_S^{n+1}(B)$, which contains $\e_S(S/\p S)$ as a direct summand.
There are injective homomorphisms $R/\p\hookrightarrow S/\p S\hookrightarrow \e_S(C)$.
It is seen that $\p\in\Ass_RC\subseteq\supp_RC\subseteq\Phi^\complement$, which implies $\Phi_\p=\emptyset$ since $\Phi$ is specialization-closed.
This contradicts our assumption that $\H_{\Phi_\p}^{n+1}(R_\p)\ne0$.
Consequently, $\X$ is not $n$-wide.
\end{proof}

Now we give several answers to Question \ref{q} (see (4) and (5) of Remark \ref{rem2}).
The second assertion recovers \cite[(2)\,$\Leftrightarrow$\,(4) in Theorem 4.9]{epi}.

\begin{thm}\label{01}
Let $\Phi$ be a specialization-closed subset $\Phi$ of $\Spec R$.
\begin{enumerate}[\rm(1)]
\item
$\cd\Phi\le 0$ if and only if $\supp^{-1}(\Phi^\complement)$ is localizing.
\item
$\cd\Phi\le 1$ if and only if $\supp^{-1}(\Phi^\complement)$ is wide.
\item
Suppose that $d:=\dim R$ is such that $0<d<\infty$.
Then $\cd\Phi\le d-1$ if and only if $\supp^{-1}(\Phi^\complement)$ is $(d-1)$-wide.
\end{enumerate}
\end{thm}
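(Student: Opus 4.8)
The plan is to get the three ``only if'' implications in one stroke from Theorem \ref{nser}: it says $\cd\Phi\le n$ forces $\X:=\supp^{-1}(\Phi^\complement)$ to be $n$-wide, and since $\X$ is automatically closed under extensions and direct sums by Corollary \ref{sup1}(2), parts (4) and (5) of Remark \ref{rem2} rewrite ``$0$-wide'', ``$1$-wide'' and ``$(d-1)$-wide'' as localizing, wide and $(d-1)$-wide (taking $n=0,1,d-1$, with $d-1\ge0$). So the work is in the three converses. The recurring device I would use is that any injective module with associated primes avoiding $\Phi$ lies in $\X$, since $\Ass=\supp$ for injectives (Proposition \ref{sup0}(2)): concretely, any $N$ with $\Gamma_\Phi(N)=0$ has $\Ass\E^0(N)=\Ass N\subseteq\Phi^\complement$ by Proposition \ref{1}(1b), so $\E^0(N)\in\X$, and each term $\E^i(M)/\Gamma_\Phi(\E^i(M))$ of the complex $\E(M)/\Gamma_\Phi(\E(M))\cong\lambda_\Phi(M)$ (Proposition \ref{lmd}) lies in $\X$.

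For (1) I would take $\X$ localizing, hence closed under submodules, and for $M\in\Mod R$ set $N:=M/\Gamma_\Phi(M)$; then $\Gamma_\Phi(N)=0$ by Proposition \ref{1}(3), so $N\hookrightarrow\E^0(N)\in\X$ gives $N\in\X$, whence $\R\Gamma_\Phi(N)\cong0$ by Remark \ref{trans}(1) and $\H_\Phi^{>0}(N)=0$; as $\H_\Phi^i(M)\cong\H_\Phi^i(N)$ for $i>0$ (Proposition \ref{1}(3)) this yields $\cd\Phi\le0$. For (2) I would take $\X$ wide, hence closed under kernels (Remark \ref{rem2}(5)), and for $M\in\Mod R$ observe $\d_\Phi^0(M)=\H^0(\lambda_\Phi(M))\cong\Ker(C^0\to C^1)$ with $C^i=\E^i(M)/\Gamma_\Phi(\E^i(M))\in\X$, so $\d_\Phi^0(M)\in\X$ and $\R\Gamma_\Phi(\d_\Phi^0(M))\cong0$; then $\H_\Phi^{\ge2}(\d_\Phi^0(M))=0$, and the isomorphisms $\H_\Phi^n(M)\cong\H_\Phi^n(\d_\Phi^0(M))$ for $n\ge2$ of Proposition \ref{trs}(5) give $\H_\Phi^{\ge2}(M)=0$, i.e.\ $\cd\Phi\le1$.

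For (3) I would take $\X$ to be $(d-1)$-wide and feed it to Proposition \ref{111}, which gives $\H_\Phi^i(R)_\p=0$ whenever $i>d-1$ and $\height\p\le i$; then $\H_\Phi^i(R)=0$ for $i>d$ by Remark \ref{rem-}(3), while for $i=d$ every prime has $\height\p\le\dim R=d$, so $\H_\Phi^d(R)_\p=0$ for all $\p$ and hence $\H_\Phi^d(R)=0$. Thus $\sup\{i\mid\H_\Phi^i(R)\ne0\}\le d-1$, and Proposition \ref{cdr} (valid since $\dim R<\infty$) turns this into $\cd\Phi\le d-1$. The main obstacle here is really the external input Proposition \ref{111}, which relies on Andr\'e's existence theorem for balanced big Cohen--Macaulay modules \cite{And}; and the closing dimension count succeeds only because the height hypothesis of Proposition \ref{111} is vacuous at $i=d$, which is precisely why the converse of Theorem \ref{nser} can fail for $2\le n\le d-2$ (see Example \ref{102'}). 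By contrast, parts (1) and (2) need no finiteness of Krull dimension and are formal consequences of the torsion/transform machinery of Section 3.
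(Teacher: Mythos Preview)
Your proof is correct and follows essentially the same strategy as the paper: Theorem \ref{nser} handles the ``only if'' directions, part (1) passes to $M/\Gamma_\Phi(M)$, part (2) passes to $\d_\Phi^0(M)$, and part (3) invokes Proposition \ref{111} together with Proposition \ref{cdr}. The one notable difference is in (2): you exhibit $\d_\Phi^0(M)$ directly as $\Ker\bigl(\E^0(M)/\Gamma_\Phi(\E^0(M))\to\E^1(M)/\Gamma_\Phi(\E^1(M))\bigr)$ via Proposition \ref{lmd}, whereas the paper first applies Proposition \ref{trs}(4) to get $\H_\Phi^{\le1}(\d_\Phi^0(M))=0$ and then uses Proposition \ref{6} to conclude $\E^i(\d_\Phi^0(M))\in\X$ for $i=0,1$. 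Your route is a bit more economical, avoiding the detour through Propositions \ref{trs}(4) and \ref{6}; the paper's route has the virtue of making the parallel with part (1) more visible (in both cases one shows the ``improved'' module has enough vanishing local cohomology to embed in injectives from $\X$).
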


\begin{proof}
The ``only if'' parts of the three assertions follow from Theorem \ref{nser} and (4) and (5) of Remark \ref{rem2}.
So we have only to prove the ``if'' parts.
Set $\X=\supp^{-1}(\Phi^\complement)$.

(1) Fix an $R$-module $M$ and put $N=M/\Gamma_\Phi(M)$.
Proposition \ref{1} implies $\Ass N\subseteq\Phi^\complement$, and $\e_R(N)$ is in $\X$ by Proposition \ref{sup0}(2).
Now assume that $\X$ is localizing.
Then $\X$ is closed under submodules, and hence $N$ belongs to $\X$.
By Remark \ref{trans}(1) we have $\R\Gamma_\Phi(N)=0$.
Using Proposition \ref{1}(3), we obtain $\H_\Phi^{>0}(M)=0$.

(2) Fix an $R$-module $M$ and put $N=\d_\Phi^0(M)$.
Proposition \ref{trs}(4) implies $\H_\Phi^{\le1}(N)=0$, and $\Ass\E^i(N)\subseteq\Phi^\complement$ for $i=0,1$ by Proposition \ref{6}.
Proposition \ref{sup0}(2) implies $\E^i(N)\in\X$ for $i=0,1$.
Now, suppose that $\X$ is wide.
Then $\X$ is closed under kernels, and hence $N$ is in $\X$.
Remark \ref{trans}(1) shows $\R\Gamma_\Phi(N)=0$.
By Proposition \ref{trs}(5) we conclude $\H_\Phi^{\ge2}(M)=0$.

(3) Suppose that $\X$ is $(d-1)$-wide.
Then it is observed by Proposition \ref{111} that $\H_\Phi^i(R)_\p=0$ for all integers $i\ge d$ and all prime ideals $\p$.
Hence $\H_\Phi^i(R)=0$ for all integers $i\ge d$, and Proposition \ref{cdr} concludes $\cd\Phi\le d-1$.
\end{proof}

The following proposition gives a necessary condition for $n$-wideness.
For an ideal $I$ of $R$ we denote by $\d(I)$ the set of prime ideals of $R$ not containing $I$.

\begin{prop}\label{110}
Let $I$ be an ideal of $R$.
Let $n\ge0$ be an integer.
Let $M$ be a finitely generated $R$-module with $IM\ne M$.
If $\supp^{-1}\d(I)$ is $n$-wide, then $\grade(I,M)\le n$.
\end{prop}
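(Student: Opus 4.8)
The plan is to argue by contradiction: suppose $\grade(I,M)\ge n+1$ and deduce $IM=M$. Since $IM\neq M$ and $M$ is finitely generated over the noetherian ring $R$, we have $\grade(I,M)=\inf\{i\mid\H_I^i(M)\neq0\}$, so the assumption gives $\H_I^i(M)=0$ for all $0\le i\le n$. Apply Proposition \ref{6} to the closed, hence specialization-closed, subset $\Phi=\V(I)$; since $\Gamma_\Phi=\Gamma_I$ and therefore $\H_\Phi^i=\H_I^i$, we obtain $\Ass\E^i(M)\subseteq\V(I)^\complement=\d(I)$ for all $0\le i\le n$. As each $\E^i(M)$ is injective, Proposition \ref{sup0}(2) gives $\supp\E^i(M)=\Ass\E^i(M)\subseteq\d(I)$; that is, $\E^i(M)\in\X:=\supp^{-1}(\d(I))$ for $0\le i\le n$.

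Next I would feed the truncated minimal injective resolution of $M$ into the $n$-wideness hypothesis. Setting $C=\Im(\E^n(M)\to\E^{n+1}(M))$, there is an exact sequence
$$
0\to M\to\E^0(M)\to\cdots\to\E^n(M)\to C\to0
$$
in $\Mod R$ whose middle terms $\E^0(M),\dots,\E^n(M)$ all lie in $\X$. Because $\X$ is $n$-wide it is, in particular, closed under $n$-kernels, so $M\in\X$; that is, $\supp M\subseteq\d(I)$. Finally, since $M$ is finitely generated, Proposition \ref{sup0}(3) gives $\supp M=\Supp M=\V(\ann_R M)$, whence $\V(\ann_R M)\cap\V(I)=\V(\ann_R M+I)=\emptyset$, and therefore $\ann_R M+I=R$. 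Writing $1=a+b$ with $a\in I$ and $b\in\ann_R M$ yields $m=am\in IM$ for every $m\in M$, so $IM=M$, contradicting the hypothesis. Hence $\grade(I,M)\le n$.

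I do not expect a substantial obstacle here; the one point needing care is the passage, via Proposition \ref{6} and Proposition \ref{sup0}(2), from the vanishing $\H_I^{\le n}(M)=0$ to the membership $\E^i(M)\in\X$, together with checking that the displayed exact sequence has exactly the shape required by the definition of being closed under $n$-kernels (in particular for the edge case $n=0$, where it becomes $0\to M\to\E^0(M)\to C\to0$ and $n$-kernels means submodules). Everything else is routine.
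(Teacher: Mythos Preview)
Your proof is correct and follows essentially the same route as the paper: assume $\grade(I,M)>n$, use Proposition~\ref{6} and Proposition~\ref{sup0}(2) to place $\E^0(M),\dots,\E^n(M)$ in $\X=\supp^{-1}\d(I)$, and then apply closure under $n$-kernels to conclude $M\in\X$. The only difference is the final contradiction: the paper observes that $\supp M\subseteq\d(I)$ forces $\Ass\E^i(M)\subseteq\d(I)$ for \emph{all} $i$ (via Proposition~\ref{sup0}(2)) and hence $\H_I^i(M)=0$ for all $i$, contradicting \cite[Theorem~6.2.7]{BS}, whereas you exploit finite generation to get $\Supp M\cap\V(I)=\emptyset$ and deduce $IM=M$ directly---a slightly more elementary endgame that avoids the external citation.
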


\begin{proof}
Suppose contrarily that $\grade(I,M)>n$.
Then $\H_I^{\le n}(M)=0$, and $\Ass\E^i(M)\subseteq\d(I)$ for all $i\le n$ by Proposition \ref{6}.
Using Proposition \ref{sup0}(2), we have $\E^i(M)\in\supp^{-1}\d(I)$ for all $i\le n$.
There is an exact sequence $0\to M\to\E^0(M)\to\cdots\to\E^n(M)$, and since $\supp^{-1}\d(I)$ is closed under $n$-kernels, we get $M\in\supp^{-1}\d(I)$.
We obtain $\Ass\E^i(M)\subseteq\d(I)$ for all $i\ge0$ by Proposition \ref{sup0}(2) again, and $\H_I^i(M)=0$ for all $i\in\Z$ by Proposition \ref{6} again.
This contradicts \cite[Theorem 6.2.7]{BS}, and thus $\grade(I,M)\le n$.
\end{proof}

As an application of this proposition, we give an example of a subcategory which is precisely $n$-wide.

\begin{ex}
Let $\xx= x_1, \ldots, x_n$ be a sequence of elements of $R$.
Then $\supp^{-1}\d(\xx)$ is $n$-wide and which is not $(n-1)$-wide if $\xx$ is an $R$-regular sequence.
\end{ex}

\begin{proof}
Since $I:=(\xx)$ is generated by $n$-elements, one has $\cd I \le n$ and hence $\supp^{-1}\d(I)$ is $n$-wide by Theorem \ref{nser}.
If $\xx$ is a regular sequence, then $\grade I=n$ and the subcategory $\supp^{-1}\d(\xx)$ is not $(n-1)$-wide by Proposition \ref{110}.
\end{proof}

Recall that an ideal $I$ of $R$ is called {\em perfect} if $\pd_RR/I=\grade I$.
We now obtain the following theorem, which also gives affirmative answers to Question \ref{q}.

\begin{thm}\label{100}
\begin{enumerate}[\rm(1)]
\item
Suppose either that
\begin{enumerate}[\rm(i)]
\item
$R$ has prime characteristic $p>0$ and $I$ is a perfect ideal, or that
\item
$R$ is a semigroup ring $R= S[\Delta]$ for some affine semigroup (i.e., finitely generated commutative monoid) $\Delta$ and some noetherian ring $S$, and $I$ is a perfect ideal generated by elements of $\Delta$.
\end{enumerate}
The following are equivalent.\\
{\rm(a)} $\cd I\le n$.\quad
{\rm(b)} $\supp^{-1}\d(I)$ is $n$-wide.\quad
{\rm(c)} $\grade(I,M)\le n$ for all $M\in\mod R$ with $IM\ne M$.\quad
{\rm(d)} $\grade I\le n$.
\item
Let $(R,\m,k)$ be a complete regular local ring of Krull dimension $d$ such that $k$ is separably closed.
Let $I$ be an ideal of $R$ such that $\V(I)\setminus\{\m\}$ is connected.
Then $\cd I\le d-2$ if and only if $\supp^{-1}\d(I)$ is $(d-2)$-wide.
\end{enumerate}
\end{thm}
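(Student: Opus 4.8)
The plan for part (1) is to close the cycle of implications $(a)\Rightarrow(b)\Rightarrow(c)\Rightarrow(d)\Rightarrow(a)$. Here $(a)\Rightarrow(b)$ is Theorem \ref{nser} applied to the closed set $\Phi=\V(I)$, whose complement is $\d(I)$; $(b)\Rightarrow(c)$ is exactly Proposition \ref{110}; and $(c)\Rightarrow(d)$ follows by taking $M=R$, since a perfect ideal is proper, so that $IR=I\ne R$ and hence $\grade I=\grade(I,R)\le n$. All of the content is therefore in $(d)\Rightarrow(a)$, which I would deduce from the single inequality $\cd I\le\pd_R(R/I)$: by perfectness $\pd_R(R/I)=\grade I\le n$, so this inequality gives $\cd I\le n$.

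To prove $\cd I\le c$, where $c:=\pd_R(R/I)$, I would show that $\pd_R(R/I^{[q]})\le c$ for a family of ``power ideals'' $I^{[q]}$ mutually cofinal with $\{I^{m}\}_{m}$ (using $I^{[q]}\subseteq I^{q}$ and $I^{r(q-1)+1}\subseteq I^{[q]}$ for $r$-generated $I$): the Frobenius powers $I^{[q]}=(f_1^{q},\dots,f_r^{q})$ with $q=p^{e}$ in case (i), and the $t$-th monomial powers $I^{[t]}=(m_1^{t},\dots,m_r^{t})$ of the monomial generators of $I$ in case (ii). Granting this, since $\V(I)$ is closed we may invoke Proposition \ref{cdr}, while cofinality yields $\H_I^{i}(R)=\varinjlim_{q}\Ext_R^{i}(R/I^{[q]},R)=0$ for $i>c$; hence $\cd I\le c$. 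To see $\pd_R(R/I^{[q]})\le c$, take a finite free resolution $F_\bullet$ of $R/I$ of length $c$ and form the ``twist'' $F_\bullet^{[q]}$ by applying the relevant power operation to every entry of every differential; then $\H_0(F_\bullet^{[q]})=R/I^{[q]}$, and $F_\bullet^{[q]}$ is again acyclic by the Buchsbaum--Eisenbud acyclicity criterion, because the twist preserves the determinantal ranks of the differentials and the radicals of their ideals of minors. In case (i) this holds since $\det$ commutes with the $q$-th power operation in characteristic $p$ (as $(a-b)^{q}=a^{q}-b^{q}$), so each ideal of $k\times k$ minors is sent to its $q$-th Frobenius power, which has the same radical; in case (ii) one uses instead that in a $\Delta$-graded free resolution every $k\times k$ minor of a differential is a scalar times a single monomial, so the monomial-power twist again leaves the radicals of the ideals of minors and the ranks unchanged. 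I expect the verification that $F_\bullet^{[q]}$ meets the acyclicity criterion --- especially in case (ii) --- to be the main technical hurdle of part (1).

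For part (2) we may assume $d\ge2$ and $I\ne R$. The implication $(\Rightarrow)$ is once more Theorem \ref{nser}, with $\Phi=\V(I)$ and $n=d-2$. For the converse, suppose $\supp^{-1}\d(I)$ is $(d-2)$-wide; then Proposition \ref{111} (with $\Phi=\V(I)$, $n=d-2$) forces $\H_I^{d}(R)_\p=0$ for every prime $\p$, hence $\H_I^{d}(R)=0$, and $\H_I^{d-1}(R)_\p=0$ for every $\p\ne\m$, hence $\Supp\H_I^{d-1}(R)\subseteq\{\m\}$. Since $\m\in\V(I)$ but $\H_I^{d}(R)=0$ whereas $\H_\m^{d}(R)\ne0$ (as $\dim R=d$), we get $\sqrt I\ne\m$, so $\V(I)\setminus\{\m\}$ is nonempty and $\dim R/I\ge1$. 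Next, $\dim R/I\ne1$: if $\dim R/I=1$, then every prime in $\V(I)\setminus\{\m\}$ is one-dimensional and no two of them are comparable, so $\V(I)\setminus\{\m\}$ is a nonempty discrete space, and connectedness forces $\V(I)\setminus\{\m\}=\{\p\}$ with $\height\p=d-1$; localizing at $\p$ then gives $\H_I^{d-1}(R)_\p\cong\H_{\p R_\p}^{d-1}(R_\p)\ne0$ since $R_\p$ is regular local of dimension $d-1$, contradicting $\Supp\H_I^{d-1}(R)\subseteq\{\m\}$. Hence $\dim R/I\ge2$. Now $R$ is complete regular local with $k$ separably closed, $\dim R/I\ge2$, and $\V(I)\setminus\{\m\}$ is connected, so the second vanishing theorem (Ogus \cite{O} and Peskine--Szpiro \cite{PS} in equal characteristic; Huneke--Lyubeznik \cite{HL} in general) yields $\H_I^{d-1}(R)=0$. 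Together with $\H_I^{\ge d}(R)=0$ and Proposition \ref{cdr}, this gives $\cd I\le d-2$.

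In short, the two genuinely nontrivial points are the acyclicity of the twisted resolutions in $(d)\Rightarrow(a)$ of part (1), which is where hypotheses (i) and (ii) are actually used, and the appeal to the deep second vanishing theorem in part (2); everything else is bookkeeping with Theorem \ref{nser} and Propositions \ref{cdr}, \ref{110} and \ref{111}.
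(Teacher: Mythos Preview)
Your proof is correct. Part (1) follows exactly the paper's cycle $(a)\Rightarrow(b)\Rightarrow(c)\Rightarrow(d)\Rightarrow(a)$; the paper simply cites \cite[Lemma 2.1, Corollaries 2.2 and 2.4]{V} for $(d)\Rightarrow(a)$, and what you sketch (twisted resolutions plus Buchsbaum--Eisenbud acyclicity) is precisely Varbaro's argument.

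For part (2) your route is genuinely different from the paper's, and more circuitous. You invoke Proposition \ref{111} to get $\H_I^d(R)=0$ and $\Supp\H_I^{d-1}(R)\subseteq\{\m\}$, and then argue by cases to force $\dim R/I\ge2$ before applying the second vanishing theorem. The paper instead uses Proposition \ref{110} directly: since $\supp^{-1}\d(I)$ is $(d-2)$-wide and $I\ne R$, one gets $\grade I\le d-2$ immediately; and because $R$ is regular local (hence Cohen--Macaulay), $\grade I=\height I$, so $\dim R/I\ge2$ in one line. Both arguments then finish with \cite[Theorem 1.1]{HL}. Your approach works, but the paper's is shorter and avoids the case analysis on $\dim R/I$; note also that Proposition \ref{110} already sufficed for you in part (1), so reusing it here would be more economical than bringing in Proposition \ref{111}.
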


\begin{proof}
(1) The implication (c) $\Rightarrow$ (d) is clear, while (a) $\Rightarrow$ (b) $\Rightarrow$ (c) follow from Theorem \ref{nser} and Proposition \ref{110}.
The proofs of \cite[Lemma 2.1 and Corollaries 2.2, 2.4]{V} show (d) $\Rightarrow$ (a).

(2) The ``only if'' part follows from Theorem \ref{nser}.
Let us show the ``if'' part.
Proposition \ref{110} shows $\grade I\le d-2$, which implies $\dim R/I\ge 2$ since $R$ is a Cohen--Macaulay local ring.
We obtain $\cd I\le d-2$ by \cite[Theorem 1.1]{HL}.
\end{proof}

We state another result in relation to Question \ref{q}.

\begin{thm}\label{10}
The following are equivalent for an integer $n\ge0$.
\begin{enumerate}[\rm(1)]
\item
The subcategory $\supp^{-1}((\Max R)^\complement)$ is $n$-wide.
\item
The subcategory $\supp^{-1}\Phi$ is $n$-wide for every generalization-closed subset $\Phi$ of $\Spec R$.
\item
There is an inequality $\cd\Phi \le n$ for every specialization-closed subset $\Phi$ of $\Spec R$.
\item
The inequality $\dim R \le n$ holds.
\end{enumerate}
\end{thm}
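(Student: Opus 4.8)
The plan is to prove the equivalence by running the cycle of implications $(4)\Rightarrow(3)\Rightarrow(2)\Rightarrow(1)\Rightarrow(4)$, of which only the last carries real content. For $(4)\Rightarrow(3)$ I would simply quote Remark \ref{rem-}(3): it gives $\cd\Phi\le\dim R\le n$ for every specialization-closed $\Phi$. For $(3)\Rightarrow(2)$, recall that a subset $\Phi\subseteq\Spec R$ is generalization-closed exactly when $\Phi^\complement$ is specialization-closed; applying (3) to $\Phi^\complement$ yields $\cd(\Phi^\complement)\le n$, and then Theorem \ref{nser} shows $\supp^{-1}((\Phi^\complement)^\complement)=\supp^{-1}(\Phi)$ is $n$-wide. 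For $(2)\Rightarrow(1)$, observe that $\Max R$ is specialization-closed, since a prime ideal containing a maximal ideal equals it; hence $(\Max R)^\complement$ is generalization-closed and (1) is precisely the instance $\Phi=(\Max R)^\complement$ of (2).

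The substantial step is $(1)\Rightarrow(4)$, and here the strategy is to convert the categorical hypothesis into a vanishing statement via Proposition \ref{111} and then contradict Grothendieck non-vanishing. Assume $\supp^{-1}((\Max R)^\complement)$ is $n$-wide and, for contradiction, that $\dim R>n$. Since a Noetherian local ring has finite Krull dimension, one can choose a maximal ideal $\m$ with $h:=\height\m$ finite and $h\ge n+1$ (this works whether $\dim R$ is finite or infinite, e.g. for Nagata-type rings, by picking $\m$ above a long chain of primes or of large height). Applying Proposition \ref{111} to the specialization-closed set $\Phi=\Max R$ with exponent $i=h$ and prime $\p=\m$ — legitimate since $i>n$ and $\height\m=h\le i$ — gives $\H_{\Max R}^{h}(R)_\m=0$. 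Localizing at $\m$, one has $(\Max R)_\m=\{\m R_\m\}$, because a prime of $R_\m$ contracts into $\Max R$ only when it equals $\m R_\m$; hence Proposition \ref{1}(5) yields
$$
\H_{\Max R}^{h}(R)_\m\cong\H_{(\Max R)_\m}^{h}(R_\m)=\H_{\m R_\m}^{h}(R_\m).
$$
But $R_\m$ is a Noetherian local ring of Krull dimension $h$, so Grothendieck's non-vanishing theorem \cite[Theorem 6.1.4]{BS} forces $\H_{\m R_\m}^{h}(R_\m)\ne0$, a contradiction. Therefore $\dim R\le n$, closing the cycle.

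The hard part is entirely in $(1)\Rightarrow(4)$: the hypothesis is a pure closure property of a subcategory of $\Mod R$, the conclusion concerns $\dim R$, and the bridge between them is Proposition \ref{111}, whose proof rests on the existence of balanced big Cohen--Macaulay modules due to Andr\'e \cite{And}. Once that bridge is available, the only delicate point is that $\dim R$ may be infinite, which is handled by passing to a single maximal ideal of large but finite height; after that, the localization formula of Proposition \ref{1}(5) and the Grothendieck non-vanishing theorem finish the argument, and the remaining implications are routine bookkeeping with complements and with Theorem \ref{nser}.
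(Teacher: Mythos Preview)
Your proof is correct and follows essentially the same route as the paper: the same cycle $(4)\Rightarrow(3)\Rightarrow(2)\Rightarrow(1)\Rightarrow(4)$, with the first three implications handled via Remark \ref{rem-}(3), Theorem \ref{nser}, and the observation that $(\Max R)^\complement$ is generalization-closed, and the substantive step $(1)\Rightarrow(4)$ carried out by choosing a maximal ideal $\m$ of height $h>n$, invoking Proposition \ref{111} to get $\H_{\Max R}^{h}(R)_\m=0$, applying Proposition \ref{1}(5) to identify this with $\H_{\m R_\m}^{h}(R_\m)$, and contradicting Grothendieck non-vanishing. Your added remarks on the infinite-dimensional case and on why $(\Max R)_\m=\{\m R_\m\}$ are accurate elaborations of points the paper leaves implicit.
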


\begin{proof}
The implications (4) $\Rightarrow$ (3) and (3) $\Rightarrow$ (2) follow from Remark \ref{rem-}(3) and Theorem \ref{nser} respectively, while (2) $\Rightarrow$ (1) is obvious.
Let us show (1) $\Rightarrow$ (4).
Assume $\dim R>n$.
Then there is a maximal ideal $\m$ of $R$ with $h:=\dim R_\m=\height\m>n$.
Since $\supp^{-1}((\Max R)^\complement)$ is $n$-wide, Proposition \ref{111} implies $\H_{\Max R}^h(R)_\m=0$.
Using Proposition \ref{1}(5), we have $0=\H_{(\Max R)_\m}^h(R_\m)=\H_{\m R_\m}^h(R_\m)$, which gives a contradiction.
We conclude that $\dim R\le n$.
\end{proof}

The following result is a direct consequence of Theorems \ref{nser} and \ref{10}.

\begin{cor}
It holds that $\cd(\Max R)<\infty$ if and only if $\dim R<\infty$.
\end{cor}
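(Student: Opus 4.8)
The plan is to obtain both implications immediately from the results already proved, so that essentially no new argument is needed. First I would record the trivial observation that $\Max R$ is always specialization-closed: a maximal ideal admits no proper specialization, so the defining condition holds vacuously, and hence $\cd(\Max R)$ is defined. For the ``if'' direction it then suffices to invoke Remark \ref{rem-}(3), which gives $\cd(\Max R)\le\dim R$; thus $\dim R<\infty$ forces $\cd(\Max R)<\infty$.

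For the converse, suppose $\cd(\Max R)<\infty$. We may assume $R\ne0$, so $\Max R\ne\emptyset$ and hence, by Remark \ref{rem-}(1), $n:=\cd(\Max R)$ is a nonnegative integer. Applying Theorem \ref{nser} to the specialization-closed subset $\Phi=\Max R$ with this $n$ (the hypothesis $\cd\Phi\le n$ holds by the very choice of $n$), we conclude that $\supp^{-1}((\Max R)^\complement)$ is $n$-wide. This is exactly condition (1) of Theorem \ref{10}, so the implication (1) $\Rightarrow$ (4) of that theorem yields $\dim R\le n<\infty$, as desired. In fact this shows a little more, namely $\cd(\Max R)=\dim R$ whenever either side is finite, but the stated corollary only asks for the finiteness equivalence.

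I do not expect a genuine obstacle here: all the content is already absorbed into Theorems \ref{nser} and \ref{10}, whose proofs (through Proposition \ref{111} and the existence of balanced big Cohen--Macaulay modules) carry the real weight. The only point requiring a moment's care is the bookkeeping around degenerate cases: $\cd(\Max R)=-\infty$ exactly when $\Max R=\emptyset$, i.e. when $R=0$, in which case $\dim R$ is likewise not finite-positive and the equivalence holds trivially; this is why the reduction to ``$n$ a nonnegative integer'' above is legitimate.
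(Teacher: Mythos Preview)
Your proof is correct and follows the same route the paper indicates: the corollary is obtained as an immediate consequence of Theorem~\ref{nser} and Theorem~\ref{10}, with the ``if'' direction coming from Remark~\ref{rem-}(3) (which is also what underlies the implication (4)$\Rightarrow$(3) in Theorem~\ref{10}). Your additional observation that in fact $\cd(\Max R)=\dim R$ and your care with the degenerate case $R=0$ go slightly beyond what the paper states, but are accurate.
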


The reader may wonder if there exists a specialization-closed subset $\Phi$ of $\Spec R$ with $\cd\Phi=\infty$.
The above corollary shows that there actually does: whenever $R$ has infinite Krull dimension, $\Max R$ is such a specialization-closed subset.

Finally, we clarify that Question \ref{q} always has a negative answer for $2\le n\le\dim R-2$.
For this, we prove a proposition.

\begin{prop}\label{106}
Let $1\le r\le s$ be integers.
Let $\a,\b$ be ideals of $R$ generated by $r,s$ elements, respectively.
Put $I=\a\cap\b$ and $J=\a+\b$.
Assume $\H_J^{r+s}(R)\ne0$.
Then $\cd I=r+s-1$, and $\supp^{-1}\d(I)$ is $s$-wide.
\end{prop}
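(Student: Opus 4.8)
The plan is to prove the two assertions separately; the equality $\cd I=r+s-1$ rests on the Mayer--Vietoris sequence for local cohomology, while the $s$-wideness is essentially formal and does not use the first part.

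For the equality, I would invoke the Mayer--Vietoris sequence attached to the pair of ideals $\a,\b$ (see \cite{BS}): for every $R$-module $M$ there is a long exact sequence
$$
\cdots\to\H_J^i(M)\to\H_\a^i(M)\oplus\H_\b^i(M)\to\H_I^i(M)\to\H_J^{i+1}(M)\to\cdots,
$$
recalling $I=\a\cap\b$ and $J=\a+\b$. Since $\a$, $\b$ and $J$ are generated by $r$, $s$ and $r+s$ elements, we have $\H_\a^i(M)=0$ for $i>r$, $\H_\b^i(M)=0$ for $i>s$, and $\H_J^i(M)=0$ for $i>r+s$, for all $M$. Hence for every $i\ge r+s$ the terms $\H_\a^i(M)$, $\H_\b^i(M)$ and $\H_J^{i+1}(M)$ all vanish (here we use $r,s\ge1$), forcing $\H_I^i(M)=0$; this gives $\cd I\le r+s-1$. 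For the reverse inequality I would take $M=R$ and examine the piece $\H_I^{r+s-1}(R)\to\H_J^{r+s}(R)\to\H_\a^{r+s}(R)\oplus\H_\b^{r+s}(R)$ of the sequence: the last term vanishes by the above, so the first arrow is surjective, and since $\H_J^{r+s}(R)\ne0$ by hypothesis we obtain $\H_I^{r+s-1}(R)\ne0$, whence $\cd I\ge r+s-1$. Combining, $\cd I=r+s-1$.

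For the $s$-wideness of $\supp^{-1}\d(I)$, the key observation I would record first is the decomposition
$$
\supp^{-1}\d(I)=\supp^{-1}\d(\a)\cap\supp^{-1}\d(\b),
$$
which holds because $\V(I)=\V(\a\cap\b)=\V(\a)\cup\V(\b)$, so that $\d(I)=\V(I)^\complement=\d(\a)\cap\d(\b)$, together with the trivial fact that $\supp^{-1}$ turns an intersection of subsets of $\Spec R$ into an intersection of subcategories. Now $\cd\a\le r\le s$ and $\cd\b\le s$, so Theorem \ref{nser} applied to the closed sets $\V(\a)$ and $\V(\b)$ (with $n=s$) shows that $\supp^{-1}\d(\a)=\supp^{-1}(\V(\a)^\complement)$ and $\supp^{-1}\d(\b)=\supp^{-1}(\V(\b)^\complement)$ are both $s$-wide. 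By Remark \ref{rem2}(3), their intersection $\supp^{-1}\d(I)$ is $\max\{s,s\}=s$-wide, as desired.

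No serious obstacle arises; the only points requiring care are the index bookkeeping in the Mayer--Vietoris sequence and the numerical inequalities, which rely on $r,s\ge1$ and $r\le s$. I would stress that the second half uses only the weak bounds $\cd\a,\cd\b\le s$ rather than the sharp value of $\cd I$: when $r\ge2$ one has $s<r+s-1=\cd I$, so $\supp^{-1}\d(I)$ is $s$-wide for an integer $s$ strictly below $\cd I$, which is precisely the mechanism producing counterexamples to the converse of Theorem \ref{nser}.
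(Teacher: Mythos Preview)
Your proof is correct and follows essentially the same approach as the paper: Mayer--Vietoris for the computation of $\cd I$, and the decomposition $\supp^{-1}\d(I)=\supp^{-1}\d(\a)\cap\supp^{-1}\d(\b)$ together with Theorem \ref{nser} and Remark \ref{rem2}(3) for the $s$-wideness. The only cosmetic difference is that the paper first records that $\supp^{-1}\d(\a)$ is $r$-wide and then invokes Remark \ref{rem2}(2) (so the intersection is $\max\{r,s\}$-wide), whereas you apply Theorem \ref{nser} with $n=s$ to both factors directly; these amount to the same thing.
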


\begin{proof}
The Mayer--Vietoris sequence \cite[3.2.3]{BS} gives an exact sequence $\cdots\to\H_J^i(M)\to\H_\a^i(M)\oplus\H_\b^i(M)\to\H_I^i(M)\to\H_J^{i+1}(M)\to\cdots$ for each $R$-module $M$.
As $\a,\b$ (resp. $J$) are generated by less than $r+s$ (resp. $r+s+1$) elements, we have $\H_\a^i(M)\oplus\H_\b^i(M)=\H_J^{i+1}(M)=0$ for all $i\ge r+s$.
Hence $\H_I^{\ge r+s}(M)=0$.
The exact sequence $\H_I^{r+s-1}(R)\to\H_J^{r+s}(R)(\ne0)\to\H_\a^{r+s}(R)\oplus\H_\b^{r+s}(R)(=0)$ shows $\H_I^{r+s-1}(R)\ne0$.
It follows that $\cd I=r+s-1$.

Since $\a,\b$ are generated by $r,s$ elements, we have $\cd\a\le r$ and $\cd\b\le s$.
Theorem \ref{nser} shows that $\supp^{-1}\d(\a)$ is $r$-wide and $\supp^{-1}\d(\b)$ is $s$-wide.
It follows from Remark \ref{rem2}(2)(3) that $\supp^{-1}\d(I)=\supp^{-1}\d(\a)\cap\supp^{-1}\d(\b)$ is $s$-wide.
\end{proof}

The following example immediately follows from Proposition \ref{106}.

\begin{ex}\label{102'}
Let $\xx=x_1,\dots,x_t$ be a sequence of elements of $R$ such that $\height(\xx)=t\ge 4$.
Let $\frac{t}{2} \le s \le t-2$ be an integer.
Consider the ideal $I=(x_1,\dots,x_{t-s})\cap(x_{t-s+1},\dots,x_t)$.
Then $\supp^{-1}\d(I)$ is $s$-wide, but $\cd I=(t-s) + s -1>s$.
\end{ex}

This example says that Question \ref{q} has a negative answer whenever $2\le n\le\dim R-2$ (for example, one can take $s=n$ and $t=n+2$).

\begin{ac}
Part of this work was done during Takahashi's visits to the University of Kansas from March 2018 to September 2019, and to the Vietnam Institute for Advanced Study in Mathematics (VIASM) in the spring of 2019 invited by the working group of Nam, Tri and Yen.
He thanks them for their hospitality.
Finally, the authors thank the anonymous referee for reading the paper carefully and giving helpful comments.
\end{ac}

\end{document}